\documentclass[12pt]{amsart}

\usepackage{times, amsmath, amsfonts, amssymb, amsthm}
\usepackage{times}
\usepackage{srcltx}
\usepackage{setspace}
\usepackage{color}

\usepackage[utf8]{inputenc}

\usepackage[left=3.5cm, right=3.5cm]{geometry}



\newcommand{\dd}{\,\text{\rm d}}             
\newcommand{\ee}{\text{\rm e}}

\newcommand{\abs}[1]{\left| #1 \right|}

\theoremstyle{plain}
\newtheorem{theorem}{Theorem}[section]
\newtheorem{lemma}[theorem]{Lemma}
\newtheorem{proposition}[theorem]{Proposition}
\newtheorem{corollary}[theorem]{Corollary}

\theoremstyle{definition}
\newtheorem{remark}[theorem]{Remark}

\newtheorem{assu}{Assumption}

\newcounter{mycount}

\newenvironment{romlist}{\begin{list}{\it \roman{mycount})}%
   {\usecounter{mycount}\labelwidth=1cm\itemsep 0pt}}{\end{list}}

\renewenvironment{proof}[1][] {\noindent {\bf Proof#1.} }{\hspace*{\fill}$\square$\medskip\par}

\newcommand{\N}{{\mathbf N}}
\newcommand{\Z}{{\mathbf Z}}
\newcommand{\R}{{\mathbf R}}

\newcommand{\E}{{\mathbf E}}
\newcommand{\F}{{\mathcal F}}

\renewcommand{\P}{{\mathbf P}}

\newcommand{\supp}{\operatorname{supp}}

 \title[Positive speed in QEW]{Positive speed of propagation in a semilinear parabolic interface model with unbounded random coefficients}

\date{\today}

\author[Dondl]{P.\ W.\ Dondl}
\address[P.\ W.\ Dondl]{Institute for Applied Mathematics, Universit\"at Heidelberg, Im Neuenheimer Feld 294, D-69120 Heidelberg, Germany}
\email{pwd@hcm.uni-bonn.de}
\urladdr{http://www.dondl.net/}

\author[Scheutzow]{M.\ Scheutzow}
\address[M.\ Scheutzow]{Fakult\"at II, Institut f\"ur Mathematik, Sekr.\ MA 7--5,
Technische Universit\"at Berlin, Strasse des 17.\ Juni 136,
D-10623 Berlin, Germany} \email{ms@math.tu-berlin.de}
\urladdr{http://www.math.tu-berlin.de/$\sim$scheutzow/}

\begin{document}  
\begin{abstract}
We consider a model for the propagation of a driven interface through a random field of obstacles. The evolution
equation, commonly referred to as the Quenched Edwards-Wilkinson model, is a semilinear parabolic equation
with a constant driving term and random nonlinearity  to model the influence of the obstacle field. For the case
of isolated obstacles centered on lattice points and admitting a random strength with exponential tails, we show
that the interface propagates with a finite velocity for sufficiently large driving force.  The proof consists
of a discretization of the evolution equation and a supermartingale estimate akin to the study of branching random
walks.
\end{abstract}
\maketitle

\section{Introduction, model, and the main result}

In this article, we consider a parabolic model for the evolution of an interface in a random medium. The interface at
time $t$ is assumed to be the graph of a function. The local
velocity of the interface is governed by line tension and a competition between a constant external driving force $F>0$
and a heterogeneous random field $f \colon \R\times \R \times \Omega \to \R$. This field describes the environment of the interface. More
precisely, let $(\Omega,\F,\P)$ be a complete probability space. 
We consider the evolution equation
\begin{align}
\label{eq:governing}
u_t(x,t,\omega) &=  u_{xx}(x,t,\omega) - f(x,u(x,t,\omega),\omega) + F \\
u(x,0,\omega) &= 0 \nonumber
\end{align}
for $t\ge0$, $x\in\R$, $\omega \in \Omega$. The heterogeneous field $f\ge0$ thus plays the role of
obstacles that impede the free propagation of the interface.

We are particularly interested in the macroscopic behavior of solutions to~\eqref{eq:governing} and their dependence on the
parameter $F$. Specifically, assume that the random field $f$ is not uniformly bounded from above, i.e., there exist obstacles
of arbitrarily large strength. Can one now find a deterministic constant $F^*$, such that the interface will propagate to 
infinity for $F\ge F^*$? The question of non-existence of a stationary solution in such a model -- for obstacles with exponential 
tails in the distribution of their strength -- was answered by Coville, Dirr, and Luckhaus in~\cite{Coville:2010p1074}. 
The present article extends this result and proves finite speed of propagation for large enough driving force in the same setting.

In \cite{Dirr:2009p1247}, the opposite question
was answered, namely whether interfaces become stuck (i.e., whether non-negative 
stationary solutions to the evolution equation~\eqref{eq:governing}
exist) for small but positive external load $F$. The work there is based on a percolation result~\cite{Dirr:2010p645}. Together, the results show a transition from a viscous kinetic relation at the
microscopic level to a stick-slip behavior, leading to a rate independent hysteresis at the macro-level. Problems of the form
of~\eqref{eq:governing} find substantial interest in the physics community, see for example~\cite{Kardar:1998p151,Brazovskii:2004p1248}.
Further connections to physics and to homogenization problems in degenerate elliptic equations can be found in~\cite{Coville:2010p1074}.

In order to introduce the main result of this article, we first fix the nature of the random field $f$.
\begin{assu}
\label{ass:f}
Let $f_{ij}(\omega)$, $i,j \in \Z$, be iid (independent and identically distributed) non-negative random
variables with a finite exponential moment, i.e., $\E \exp\{\lambda f_{00}(\omega)\} = \beta <\infty$ for some $\lambda >0$. Furthermore,
set $\phi \colon \R^2 \to \R$, $\phi \in C^1(\R^2)$ such that $0\le \phi \le 1$ and 
\begin{equation} \label{eq:def_delta}
\supp \phi \subset [-\delta, \delta]^2 \quad \textrm{with $\delta < 1/2$}.
\end{equation}
The random field $f$ shall then be given as
\begin{equation}
\label{eq:random_field}
f(x,y,\omega) := \sum_{i,j\in \Z} f_{ij}(\omega) \phi(x-i, y-j-1/2).
\end{equation}
\end{assu}
This means that the random field consists of obstacles centered at points of the square lattice (shifted by 1/2 in the $y$-(propagation)
direction) with random strength with exponential tail of the distribution, but uniform shape\footnote{As one can easily see from
the proofs, the assumption of uniformity of $\phi$ can be relaxed, as long as certain obvious uniform bounds are still adhered to.}. 
We show the following result about the propagation velocity of the interface.
\begin{theorem}
\label{thm:cont}
Let $u \colon \R\times [0, \infty)\times \Omega \to \R$ be a solution to~\eqref{eq:governing}  and $f$ as 
in Assumption~\ref{ass:f}. Let
\begin{align*}
U \colon [0,\infty) \times \Omega &\to \R \\
U \colon (t,\omega) &\mapsto \int_0^1 u(\xi, t, \omega) \dd \xi.
\end{align*}
Then there exists a non-decreasing function $V \colon [0,\infty) \to [0,\infty)$, that is not identically zero and
only depends on the parameters $\lambda$, $\beta$ and $\delta$, such that
\begin{equation}
\E \frac{U(t)}{t} \ge V(F) \quad \textrm{for all $t>0$}.
\end{equation}
\end{theorem}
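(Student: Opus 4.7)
The plan is to bound from below the time the interface average $U(t)$ needs to cross each integer level, and then conclude via a renewal/subadditivity argument. Concretely, setting $\tau_j := \inf\{t \geq 0 : U(t) \geq j+1\}$, I would aim to show $\E[\tau_{j+1} - \tau_j] \leq C(F) < \infty$ uniformly in $j$ for all $F$ sufficiently large, and take $V$ to be a non-decreasing envelope of $1/(2 C(F))$ on the regime where this is finite (and $V \equiv 0$ elsewhere). The parabolic comparison principle for~\eqref{eq:governing} and monotonicity in $F$ allow one to replace the smooth profile $\phi$ by a slightly larger, more tractable one — e.g.\ a profile that is $x$-independent within each horizontal strip $\{|y - j - 1/2| \leq \delta\}$ and trivial outside — and to view the $\tau_j$'s as a near-renewal sequence with harmless dependence on the incoming interface shape.

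To bound $\E[\tau_{j+1} - \tau_j]$ I would pass to a spatial discretization via the column-averages $v_i(t) := \int_{i-1/2}^{i+1/2} u(\xi,t)\dd\xi$, so that \eqref{eq:governing} reduces to a lattice ODE system with discrete Laplacian $v_{i+1} - 2 v_i + v_{i-1}$, drift $F$, and independent-across-$i$ random obstacles $f_{ij}$ at heights $j + 1/2$. Writing $w_i(t) := v_i(t) - F t$ for the deficit from ballistic motion during a single row crossing, I would consider the weighted sum
\[
M(t) := \sum_{i \in \Z} e^{-\theta w_i(t)}
\]
for a parameter $\theta \in (0,\lambda)$ to be tuned. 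The finite exponential moment $\E[e^{\lambda f_{00}}] = \beta$ and independence across columns should allow one to check that, for $F$ large enough, $M(t)$ is a supermartingale up to an exponentially small correction, exactly analogously to the additive martingale of a branching random walk: ``uncrossed'' columns play the role of particles, the discrete Laplacian acts as the branching/coupling mechanism, and $f_{ij}$ as the random displacement. A Markov-inequality and optional-stopping argument applied to $M$ then delivers the desired finite-mean row-crossing time.

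The main obstacle, and the reason the proof is non-trivial, is the \emph{unboundedness} of the obstacles: since $f_{ij}$ has only an exponential tail, at every row there almost surely exist columns with arbitrarily large obstacle strength, so no per-column waiting-time bound is available. What rescues the argument is diffusive coupling — once any single column has cleared the obstacle row, its neighbors feel $u_{xx} > 0$ and are pulled forward faster than $F$, so isolated heavy obstacles do not stop the front. The quantitative difficulty is in choosing $\theta$ small enough that $\E[e^{\theta f_{00}}] < \infty$ but large enough that $M(t)$ actually dissipates despite the spreading induced by the Laplacian; balancing these two constraints is what forces $F$ to be taken large, and constitutes the technical heart of the argument.
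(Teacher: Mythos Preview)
Your instinct to use a supermartingale in the spirit of branching random walks is exactly the right one, but you are running it along the wrong axis, and this creates a genuine gap. The functional
\[
M(t)=\sum_{i\in\Z} e^{-\theta w_i(t)}
\]
is infinite at $t=0$ (every column starts at $w_i(0)=0$) and stays infinite, since $w_i(t)\in[-Ft,0]$ for all $i$. The additive martingale of a branching random walk works because the number of particles is finite at each time; here you begin with infinitely many ``particles'' and never shed them, so optional stopping on $M$ gives nothing. Restricting to a finite window introduces boundary fluxes that are precisely of the order you are trying to control, and the renewal step you describe as involving ``harmless dependence on the incoming interface shape'' is in fact the whole difficulty: at time $\tau_j$ the interface is a random, possibly very rough, profile correlated with the obstacles already seen.

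The paper runs the supermartingale in the \emph{spatial} index instead. At a fixed time one freezes the discretized interface $(\hat u_i)_{i\in\Z}$ and proves (Lemma~2.2) that, almost surely and simultaneously for \emph{every} nonnegative integer sequence $w$,
\[
\limsup_{n\to\infty}\frac1n\sum_{i=0}^{n-1}\big(w_{i-1}+w_{i+1}-2w_i-\bar f_i(w_i,\omega)+F\big)^+\ \ge\ \bar W(F).
\]
The supermartingale is $Y_n=\sum_{\text{paths of length }n}\exp\{\bar\lambda(w_n-w_{n-1})-\mu s_n\}$ with filtration $\F_n=\sigma(\bar f_0,\dots,\bar f_{n-1})$; independence of the obstacle \emph{columns} in $i$ makes $Y_n/\gamma^n$ a nonnegative supermartingale in $n$, and its a.s.\ convergence forces any path with small $s_n/n$ to drift to $-\infty$. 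Since the actual $\hat u$ is bounded below, the Ces\`aro average of the velocities is at least $\bar W(F)$. Birkhoff's ergodic theorem (stationarity/ergodicity of $\dot u_i(t_0)$ in $i$) then converts this spatial average directly into $\E\dot U(t_0)$, with no renewal or stopping-time argument needed. Because the lower bound holds for every nonnegative sequence on a single full-measure set, the random shape of the interface at time $t_0$ is irrelevant --- which is exactly the issue your renewal scheme would have to confront.

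Two smaller technical points where your sketch would also need repair: (i) the column averages $v_i=\int_{i-1/2}^{i+1/2}u$ do not satisfy a discrete-Laplacian ODE --- integrating the PDE produces boundary fluxes $u_x(i\pm\tfrac12)$, not $v_{i\pm1}-v_i$; the paper uses the one-sided discretization $\hat u_i=\tilde u(i-\delta)+2\delta\,\tilde u_x(i-\delta)$ from Coville--Dirr--Luckhaus together with a separate estimate (Proposition~3.2) linking $\hat u_{i-1}-2\hat u_i+\hat u_{i+1}$ to the integrated velocity; (ii) the effective obstacle seen by the discrete process is not $f_{ij}$ but a running supremum of local averages, $\sup_M M^{-1}\sum_{|l-j|\le 4\delta M}f_{il}$, and one must first show this still has a finite exponential moment (Proposition~3.5), at the cost of shrinking $\lambda$ to some $\tilde\lambda<\lambda$.
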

\begin{remark} \label{rem:V_choice}
It will be evident from the proof that a viable choice for $V$ is
$$
V(F)= \frac{1}{4(1+\delta)}\sup_{\mu>\tilde{\lambda}} \frac 1 {\mu} \Big(\tilde{\lambda} ((1-2\delta)F - 2) -\log p(\tilde{\lambda},\mu)-\log \tilde{\beta} \Big),
$$
where
$$
p(l,m):= \frac 1{1-\ee^{-l}} + \frac 1{1-\ee^{l-m}}.
$$
The parameter $\tilde{\lambda}$ can be chosen arbitrarily with $\tilde{\lambda}\in (0,\lambda)$ and the parameter $\tilde{\beta}$ is given 
as
$$
\tilde{\beta} := \ee^{\tilde{\lambda}} \inf_{c>\exp\{(\log \beta)\frac{180\tilde{\lambda}}{\lambda}\}}\left(c+\int_c^\infty  
\frac{ \beta \ee^{-\frac{\lambda}{180\tilde{\lambda}}\log x }}{1-\beta\ee^{- \frac{\lambda}{180\tilde{\lambda}} \log x }} \dd x\right).
$$
\end{remark}
\begin{remark}
Theorem~\ref{thm:cont} includes the result of non-existence of non-negative stationary solutions by Coville, Dirr and Luckhaus~\cite{Coville:2010p1074}, since existence of a non-negative stationary
solution to~\eqref{eq:governing} would violate the fact that $\limsup_{t \to \infty}u(t,x)/t \ge V(F)$ for all $x\in\R$ almost
surely. See Subsection~\ref{subsec:almost_cont} for a proof of this corollary of Theorem~\ref{thm:cont}.
\end{remark}

Due to our stationarity and independence assumption on $f$, it is clear that the processes
$U_i(t,\omega) := \int_i^{i+1} u(\xi, t, \omega) \dd \xi$, $i\in\Z$ are stationary and ergodic for each $t \ge 0$.
Therefore, this theorem shows that the average area covered by the interface per
unit time and per unit length is bounded from below by a positive constant deterministic 
velocity if the driving force is sufficiently large.

We will prove the Theorem~\ref{thm:cont} by first addressing a discretized version of the evolution equation in
Section~\ref{sec:discrete} and then showing the reduction of the continuum problem to the discrete problem
in Section~\ref{sec:cont}. In the final Section~\ref{sec:conclusion} we conclude by presenting a number of open questions.

\section{The discretized front propagation model} \label{sec:discrete}
We consider the following discrete model, which arises as a lower bound for the velocity in the continuum
problem~\eqref{eq:governing} -- as shown in Section~\ref{sec:cont} -- but also is interesting to study in its own right.
Let $u_i \colon [0,\infty) \to \R$ solve the equation
\begin{align}
\frac {\dd}{\dd t} u_i(t)&=\Big( u_{i-1}(t) +  u_{i+1}(t) -2  u_{i}(t)-\tilde{f}_i(u_i(t),\omega)+F\Big)^+ \label{eq:disc_evol} \\
u_i(0)&=0, \qquad i \in \Z, \nonumber
\end{align}
where $F \ge 0$ and $\tilde{f}_i:\R \times \Omega \to [0,\infty)$, $i \in \Z$ are independent and identically distributed functions 
such that the map $(y,\omega) \mapsto \tilde{f}_0(y,\omega)$ is measurable with respect to the product of the Borel-$\sigma$ 
algebra on $\R$ and $\F$ and the map $y \mapsto \tilde{f}_0(y,\omega)$ is locally bounded for almost all $\omega \in \Omega$.
These assumptions guarantee that the equation above has a unique solution which depends measurably on $\omega$ 
for each $t \ge 0$. Note that we do not assume that the map $y \mapsto \tilde f_0(y)$ is stationary.
\begin{remark}
The symbol $(\cdot)^+$ denotes taking the non-negative part of the term inside the parenthesis. One can easily see from
the comparison principle for (discrete) elliptic equations that taking the non-negative part is only necessary if one can not ensure that
the initial velocity is non-negative. For the continuous equation, non-negativity is shown in Proposition~\ref{prop:pos_dudt}.
\end{remark}

\subsection{Lower bound on the averaged velocity} \label{subsec:averaged}

For this discretized version of the main evolution problem~\eqref{eq:governing} we can show the analog to Theorem~\ref{thm:cont}.
\begin{theorem}\label{main}
Assume -- in addition -- that there exists $\tilde{\lambda}>0$ such that 
$$
\tilde{\beta}:=\sup_{n \in \Z} \E \sup_{n-.5 < y \le n+.5} \exp\{\tilde{\lambda}  f_0(y,\omega)\} < \infty.
$$
Then there exists a non-decreasing function $W:[0,\infty) \to [0,\infty)$ which is not identically zero and 
which depends on $\tilde{\lambda}$ and $\tilde{\beta}$ only, such that 
$$
\E \dot u_0(t) \ge W(F)\; \mbox{ for all } t\ge 0 \mbox{ and hence } \E \frac{u_0(t)} t \ge W(F)\; \mbox{ for all } t> 0. 
$$ 
Specifically, we can choose 
$$
W(F)=\sup_{\mu>\tilde{\lambda}} \frac {1}{\mu} \Big(\tilde{\lambda} (F - 2) -\log p(\tilde{\lambda},\mu)-\log \tilde{\beta} \Big),
$$
where $p$ is as in Remark~\ref{rem:V_choice}. In fact, the function $V$ there is just a rescaled version of $W$.
\end{theorem}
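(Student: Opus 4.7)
We follow the supermartingale/branching-random-walk heuristic from the abstract: build an exponential Laplace-transform functional of the heights, derive a drift inequality for it, and use Gronwall plus Jensen to convert controlled decay into a positive lower bound on the mean interface velocity.

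For $\mu>\tilde\lambda>0$, where $\tilde\lambda$ is the exponential-moment parameter and $\mu$ is an auxiliary rate to be optimized at the end, define the per-site Laplace transform
$$\phi_\mu(t):=\mathbb{E}\!\left[e^{-\mu u_0(t)}\right].$$
Since the $\tilde f_i$ are i.i.d.\ in $i$ and $u_i(0)=0$ is translation-invariant, the field $(u_i(t))_{i\in\mathbb{Z}}$ is stationary in $i$, so $\phi_\mu(t)=\mathbb{E}[e^{-\mu u_j(t)}]$ for every $j$. Differentiating $\phi_\mu$ in $t$ and using $(a)^+\geq a$ in~\eqref{eq:disc_evol} gives
$$
\phi_\mu'(t) \;\leq\; -\mu\,\mathbb{E}\!\left[e^{-\mu u_0}\bigl((u_{-1}-u_0)+(u_1-u_0)+F-\tilde f_0(u_0)\bigr)\right].
$$
The two discrete-gradient contributions are handled via convexity of $x\mapsto e^{-\mu x}$ and summation by parts: the telescoping identity $\sum_i e^{-\mu u_i}(u_{i+1}-u_i)\geq\tfrac1\mu\sum_i(e^{-\mu u_i}-e^{-\mu u_{i+1}})=0$, combined with stationarity in~$i$, yields $\mathbb{E}[e^{-\mu u_0}(u_{-1}+u_1-2u_0)]\geq -c$ for an explicit constant $c$, which is the source of the ``$-2$'' inside $W(F)$.

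The crux is the bound
$$
\mathbb{E}\!\left[e^{-\mu u_0(t)}\tilde f_0(u_0(t))\right] \;\leq\; \frac{1}{\tilde\lambda}\log\!\bigl(\tilde\beta\, p(\tilde\lambda,\mu)\bigr)\phi_\mu(t),
$$
where both $\tilde\beta$ and the precise geometric-series form of $p(\tilde\lambda,\mu)$ must be produced. Starting from the elementary inequality $\tilde\lambda x\leq e^{\tilde\lambda x}-1$ and decomposing the real line into unit intervals $I_n=(n-\tfrac12,n+\tfrac12]$, one writes
$$
e^{-\mu u_0}\tilde f_0(u_0) \;\leq\; \frac{1}{\tilde\lambda}\sum_{n\in\mathbb{Z}} e^{-\mu u_0}\eins_{\{u_0\in I_n\}}\sup_{y\in I_n}e^{\tilde\lambda\tilde f_0(y)}.
$$
On $\{u_0\in I_n\}$ the Laplace factor is comparable to $e^{-\mu n}$ and the supremum factor has expectation at most $\tilde\beta$ by hypothesis. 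Splitting the sum over $n$ into the values below the typical location of $u_0$, controlled via the Markov-type bound $\mathbb{P}(u_0\leq n)\leq e^{\mu n}\phi_\mu(t)$, and the values above, where the Laplace weight $e^{-\mu n}$ itself furnishes summability at effective rate $\mu-\tilde\lambda$, reproduces exactly the two geometric series $\frac{1}{1-e^{-\tilde\lambda}}+\frac{1}{1-e^{\tilde\lambda-\mu}}$ that define $p(\tilde\lambda,\mu)$; the overall $\phi_\mu(t)$ prefactor reassembles from the Markov bounds.

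Combining the two estimates yields $\phi_\mu'(t)\leq -\mu W(F)\phi_\mu(t)$, so by Gronwall $\phi_\mu(t)\leq e^{-\mu W(F)t}$ and by Jensen $\mathbb{E}[u_0(t)]\geq W(F)\,t$, after taking the supremum over $\mu>\tilde\lambda$. The pointwise-in-$t$ statement $\mathbb{E}[\dot u_0(t)]\geq W(F)$ is obtained by noting that $t\mapsto \mathbb{E}[\dot u_0(t)]$ is non-decreasing — a consequence of the comparison principle for \eqref{eq:disc_evol} applied to time-shifted initial data — together with the time-averaged bound just derived. The \emph{main obstacle} is the obstacle estimate above: producing the exact two-geometric-series form of $p(\tilde\lambda,\mu)$ demands careful bookkeeping of how the exponential tail rate $\tilde\lambda$ interacts with the Laplace weight $\mu$ across the lattice of intervals $I_n$, and a rigorous justification of the summation-by-parts on the infinite lattice, which rests on uniform tail bounds ultimately traceable to the exponential moment hypothesis.
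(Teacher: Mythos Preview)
Your approach diverges fundamentally from the paper's: you run a Laplace-transform/Gronwall argument in the \emph{time} variable, whereas the paper's supermartingale lives in the \emph{spatial} index. The paper freezes a time $t_0$, rounds the spatial profile $(u_i(t_0))_i$ to an integer path, and defines $Y_n=\sum\exp\{\bar\lambda v_n-\mu s_n\}$, the sum taken over \emph{all} integer paths of length $n$; the supermartingale property $\E[Y_{n+1}\mid\F_n]\le\gamma Y_n$ with respect to the spatial filtration $\F_n=\sigma(\bar f_0,\dots,\bar f_{n-1})$ holds precisely because the fresh obstacle column $\bar f_n(\cdot)$ is independent of $\F_n$. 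The $-2$ in $W(F)$ arises from the integer rounding (Lemma~\ref{lem:vel_est}), and the conclusion $\E\dot u_0(t_0)\ge W(F)$ comes from Birkhoff's ergodic theorem applied to the spatial averages, not from any monotonicity in $t$.

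Your central obstacle estimate cannot be established as sketched. After decomposing into the intervals $I_n$ you need to control $\E\big[e^{-\mu u_0}\eins_{\{u_0\in I_n\}}\sup_{y\in I_n}e^{\tilde\lambda\tilde f_0(y)}\big]$, but $u_0(t)$ is \emph{not independent} of $\tilde f_0$: the interface tends to dwell exactly where the obstacles are large, so the two factors are correlated in the direction that hurts you. No Markov bound on $\P(u_0\le n)$ repairs this, since the problem is the joint law of $(u_0,\tilde f_0)$, not a marginal tail. This is precisely the obstruction the paper circumvents by summing over \emph{all} integer paths---the path variable is then decoupled from the randomness, and the expectation over the independent column $\bar f_n$ factors cleanly and produces the two geometric series in $p(\tilde\lambda,\mu)$. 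There are also secondary discrepancies: your convexity/stationarity argument actually yields $\E[e^{-\mu u_0}(u_{-1}+u_1-2u_0)]\ge 0$, not $-2$, so the $-2$ in $W(F)$ is unaccounted for in your scheme; your claimed coefficient $\tfrac{1}{\tilde\lambda}\log(\tilde\beta\,p(\tilde\lambda,\mu))$ does not reproduce the stated $\mu$-dependence of $W(F)$; and $t\mapsto\E\dot u_0(t)$ need not be non-decreasing (comparison of time-shifted solutions gives monotonicity of $u$, not of $\dot u$).
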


\begin{remark}
The supremum inside the expectation in the definition of $\tilde{\beta}$ is not necessarily measurable. Strictly speaking, one should replace the 
expectation by the infimum of the expectation of all random variables dominating the supremum.
\end{remark}

%

We will prove Theorem \ref{main} using the following result:

\begin{lemma}\label{discrete}
Let $\bar f_{ij}:\Omega \to [0,\infty)$, $i,j \in \Z$ be random variables such that the functions 
$\bar f_i:\Omega \times \Z \to [0,\infty)$ defined as $\bar f_i(\omega,j):=\bar f_{ij}(\omega)$ are independent. 
Assume that there exists some $\bar{\lambda}>0$ such that $\bar \beta:=\sup_{m,n \in \Z} \E \exp\{\bar{\lambda} \bar f_{mn}\}< \infty$. 
Then, for each $F>0$, there exists a set $\Omega_0$ of full measure such that for any function $w: \Omega \times \Z \to \N_0$ and any 
$\omega \in \Omega_0$, we have 
\begin{equation}\label{ineq}
\limsup_{n \to \infty} \frac 1n \sum_{i=1}^n \Big(w_{i-1}+w_{i+1}-2 w_i-\bar f_i(\omega,w_i)+F\Big)^+ \ge \bar{W}(F),
\end{equation}
where 
$$
\bar{W}(F):=\sup_{\mu>\bar{\lambda}} \frac 1 {\mu} \Big(\bar{\lambda} F -\log p(\bar{\lambda},\mu)-\log \bar{\beta} \Big),
$$
and $p$ is defined as in Remark~\ref{rem:V_choice}.
\end{lemma}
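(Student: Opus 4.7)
The plan is to establish the lemma through a weighted first-moment estimate, in the spirit of the standard one-particle tail bound for branching random walks. Fix $\mu>\bar{\lambda}$ and a threshold $\alpha<\bar W(F)$. Since $w(\omega,\cdot)$ is allowed to be any measurable function, it suffices to show that
\[
\P\bigl(\exists\,a\in\N_0^{\Z}:\ S_n(a)<\alpha n\bigr)\quad\text{decays geometrically in }n,
\]
where $S_n(a):=\sum_{i=1}^n X_i(a)$ and $X_i(a):=\bigl(a_{i-1}+a_{i+1}-2a_i-\bar f_i(a_i)+F\bigr)^+$. The almost sure conclusion $\limsup_n S_n(w)/n\ge\bar W(F)$ then follows from the inclusion $\{\exists a\,\forall n\ge N:\ S_n(a)<\alpha n\}\subset\{\exists a:\ S_N(a)<\alpha N\}$, Borel--Cantelli in $N$, and a countable density argument letting $\alpha\nearrow\bar W(F)$.

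For each fixed deterministic $a$, the summands $X_i(a)$ are independent across $i$ because the $\bar f_i$ are, so Markov's inequality gives $\P(S_n(a)<\alpha n)\le e^{\mu\alpha n}\prod_{i=1}^n\E e^{-\mu X_i(a)}$. The per-step bound is obtained from the exponential moment of $\bar f_i(a_i)$ using the identity $-\mu(c-Y)^+=\mu\min(Y,c)-\mu c$ together with the elementary inequality $\mu\min(y,c)\le(\mu-\bar{\lambda})c+\bar{\lambda}y$ (valid for $y\ge 0$, $c>0$); this yields
\[
\E e^{-\mu X_i(a)}\le \bar{\beta}\,e^{-\bar{\lambda}\,c_i(a)}\quad\text{with }c_i(a):=a_{i-1}+a_{i+1}-2a_i+F
\]
when $c_i>0$, and the trivial bound $\E e^{-\mu X_i(a)}=1$ when $c_i\le 0$. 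Telescoping the product in the $c_i>0$ regime produces the factor $\bar{\beta}^n e^{-\bar{\lambda} nF}$ that will drive the $\bar{\lambda}F$ appearing in $\bar W(F)$.

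The infinite set $\N_0^{n+2}$ of admissible $(a_0,\ldots,a_{n+1})$ rules out a direct union bound, so I pass to a weighted first-moment quantity
\[
M_n:=\sum_{a}\pi_n(a)\,e^{-\mu S_n(a)},
\]
with $\pi_n$ chosen as a product measure over the discrete accelerations $\Delta a_i:=a_{i-1}+a_{i+1}-2a_i$. The natural candidate is a geometric distribution split by the sign of $\Delta a_i$: rate $\bar{\lambda}$ on $\{\Delta a_i\ge 0\}$, matching the tight Chernoff bound of the previous step, and rate $\mu-\bar{\lambda}$ on $\{\Delta a_i< 0\}$, compensating for the loose trivial bound $1$ in that regime. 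Adding the two geometric series produces exactly
\[
p(\bar{\lambda},\mu)=\frac{1}{1-e^{-\bar{\lambda}}}+\frac{1}{1-e^{\bar{\lambda}-\mu}}
\]
as the per-step normalisation, leading to $\E M_n\le\bigl(\bar{\beta}\,e^{-\bar{\lambda} F}\,p(\bar{\lambda},\mu)\bigr)^n$.

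For any candidate bad sequence $a^*$ with $S_n(a^*)<\alpha n$ one has $M_n\ge\pi_n(a^*)\,e^{-\mu\alpha n}$; provided $\pi_n$ is calibrated so that every such $a^*$ carries a weight not too small relative to $C^n$, Markov's inequality delivers
\[
\P(\exists\,a:\ S_n(a)<\alpha n)\le C^n/\pi_n(a^*)\quad\text{with}\quad C=\bar{\beta}\,e^{-\bar{\lambda} F}\,p(\bar{\lambda},\mu)\,e^{\mu\alpha},
\]
and $C<1$ by the choice $\alpha<\bar W(F)$. I expect the main obstacle to be precisely this calibration of $\pi_n$: the weight must simultaneously deliver the sharp normalisation $p(\bar{\lambda},\mu)$ inside $\E M_n$ and furnish a useful lower bound $\pi_n(a^*)\ge\pi_{\min}$ for all bad $a^*$, all while respecting the constraint $a_i\in\N_0$ (which prevents the product measure on $\Delta a_i$ from being exactly i.i.d.) and the one-sided asymmetry of the per-step Chernoff bound (which decays only for $\Delta a_i$ large positive). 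The two geometric series inside $p(\bar{\lambda},\mu)$ are the structural signature of the two regimes, positive versus negative discrete Laplacian, that must be handled separately.
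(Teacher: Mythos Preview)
Your per-step computation is essentially the paper's: summing over the next increment and splitting according to whether the positive part is active produces exactly the two geometric series in $p(\bar\lambda,\mu)$, and the resulting growth factor is $\gamma=\bar\beta\,e^{-\bar\lambda F}p(\bar\lambda,\mu)$. The gap is in how you use this.

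Your reduction asks for $\P\bigl(\exists\,a\in\N_0^{\Z}:\ S_n(a)<\alpha n\bigr)\to 0$, and then tries to extract this from $M_n=\sum_a\pi_n(a)e^{-\mu S_n(a)}$ via a uniform lower bound $\pi_n(a^*)\ge\pi_{\min}$ over all bad paths $a^*$. That lower bound is not available: a bad $\N_0$-valued path can have second differences $\Delta a_i$ of arbitrary size at a few sites (compensated by large obstacle values $\bar f_i(a_i)$ there), which drives any product weight $\pi_n(a^*)$ to zero faster than $\gamma^n$. You correctly flag this as ``the main obstacle'', but it is not a calibration issue to be tuned away; it is a structural dead end for a Borel--Cantelli argument based on the fixed-$n$ event $\{\exists a: S_n(a)<\alpha n\}$. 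Indeed that event need not even have small probability, since for each $\omega$ one can build paths that keep $S_n$ small for a long initial stretch before the $\N_0$ constraint eventually bites.

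The paper sidesteps this entirely by \emph{removing} the $\N_0$ constraint from the sum. Fixing $w_{-1},w_0$, it sets
\[
Y_n=\sum_{\text{all }\Z\text{-valued paths of length }n}\exp\{\bar\lambda v_n-\mu s_n\},\qquad v_n:=w_n-w_{n-1},
\]
with the weight sitting on the \emph{final first difference} $v_n$ rather than being a product over second differences. Your two-regime computation then shows $\E[Y_{n+1}\mid\F_n]\le\gamma Y_n$, so $Y_n/\gamma^n$ is a nonnegative supermartingale. Supermartingale convergence gives, on a full-measure set and simultaneously for \emph{every} integer-valued path,
\[
\limsup_{n\to\infty}\tfrac1n\bigl(\bar\lambda v_n-\mu s_n\bigr)\ \le\ \log\gamma.
\]
Only now does the constraint $w_n\in\N_0$ enter: if some $\N_0$-valued path had $\limsup_n s_n/n<\bar W(F)$, the inequality above would force $\limsup_n v_n/n<0$, hence $w_n\to-\infty$, contradicting $w_n\ge 0$. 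A countable union over $(w_{-1},w_0)\in\N_0^2$ finishes.

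So the missing idea is: do not encode $a_i\ge 0$ in the weight or in the event whose probability you bound. Sum over unconstrained integer paths to get a uniform pathwise inequality linking $v_n$ and $s_n$, and bring in nonnegativity only at the end, as the statement that $v_n/n$ cannot stay negative along an $\N_0$-valued path.
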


\begin{proof}
Since the assertion holds trivially in case $\bar{W}(F)\le 0$, we can and will assume that $\bar{W}(F)>0$. In this case, the supremum 
in the definition 
of $\bar{W}(F)$ is actually a maximum (observing that the function of $\mu$ converges to 0 as $\mu \to \infty$) and we choose a maximizer 
which we denote again by $\mu$.   Further, 
it suffices to show that for each fixed $w_{-1},w_0 \in \N_0$ there exists a set $\Omega_0$ of full measure such that for any function 
$w:\Omega \times \Z \to \Z$ attaining the prescribed values at $-1$ and $0$, and all $\omega \in \Omega_0$ satisfying 
$$
\limsup_{n \to \infty} \frac 1n \sum_{i=0}^{n-1} \Big(w_{i-1}+w_{i+1}-2 w_i-\bar f_i(\omega,w_i)+F\Big)^+ < \bar{W}(F) \mbox{ on } \Omega_0,
$$
we have $\liminf_{n \to \infty} w_n(\omega)<0$.

To see that this is true, we proceed as follows. For $n \in \N_0$ we call each function $w:\{-1,0,...,n\} \to \Z$ resp. 
$w:\{-1,0,...\} \to \Z$ starting with the 
prescribed values $w_{-1},w_0$ a {\em path} of length $n$ resp. a {\em path}. To each path (of length $n$), we 
associate $v_n:=w_n-w_{n-1}$ and 
$$
s_n:=\sum_{i=0}^{n-1} \Big(w_{i-1}+w_{i+1}-2 w_i-\bar f_i(\omega,w_i)+F\Big)^+,\qquad n \in \N_0. 
$$
Define
$$
Y_n:=\sum \exp\{\bar{\lambda} v_n-\mu s_n\},
$$
where the sum is taken over all paths of length $n$. Let $\F_k$ denote the $\sigma$-algebra generated by $\bar f_0,...,\bar f_{k-1}$. 
We now claim that
$$
\E\big(Y_{n+1}|\F_n\big) \le \gamma Y_n \mbox{ a.s.},
$$
where  
$$
\gamma=\bar \beta \exp\{-\bar{\lambda} F\} \Big( \frac 1{1-\ee^{-\bar{\lambda}}} + \frac 1{1-\ee^{\bar{\lambda}-\mu}}\Big)
$$ 
showing that $Z_n:=Y_n/\gamma^n$ is a (nonnegative) supermartingale. The proof of the claim is a straightforward computation:
\begin{align*}
\E\big(&Y_{n+1}|\F_n\big)\\
&=\sum  \exp\{-\mu s_n\}\E\Big( \sum_{j \in \Z}\exp\{\bar{\lambda} j-\mu(j-v_n-\bar  f_n(\omega,w_n)+F)^+  \}|\F_n \Big)\\
&=\sum  \exp\{-\mu s_n\}\E\Big( \sum_{j \ge \lceil v_n+\bar  f_n(\omega,w_n)-F\rceil}
\exp\{(\bar{\lambda}-\mu) j + \mu(v_n+\bar  f_n(\omega,w_n)-F)  \}\\&\hspace{.5cm}+ \sum_{j < \lceil v_n+\bar  f_n(\omega,w_n)-F\rceil} \exp\{\bar{\lambda} j\}   |\F_n \Big)\\
&=\sum  \ee^{-\mu s_n} \E\Big( \frac{\exp\{(\bar{\lambda}-\mu)\lceil v_n+\bar  f_n(\omega,w_n)-F\rceil\}}{1-\exp\{\bar{\lambda} - \mu\}} 
  \exp\{\mu (v_n+\bar  f_n(\omega,w_n)-F)\} \\
&\hspace{.5cm}+ \exp\{\bar{\lambda} \big(\lceil v_n+\bar  f_n(\omega,w_n)-F\rceil-1\big)\}\frac 1{1-\exp\{-\bar{\lambda}\}}|\F_n \Big)\\
&\le \sum  \ee^{-\mu s_n} \E\Big(  \exp\{\bar{\lambda} \big( v_n+\bar  f_n(\omega,w_n)-F)\} \Big(\frac 1{1-\ee^{\bar{\lambda}-\mu}} 
+ \frac 1{1-\ee^{-\bar{\lambda}}}        \Big)      |\F_n \Big)\\     
&\le \gamma Y_n,
\end{align*}
where the first sum is extended over all paths of length $n$. The operator $\lceil a \rceil = \inf \{z\in\Z : z\ge a\}$ denotes taking
the integer ceiling of $a\in\R$.

By the supermartingale convergence theorem, 
there exists a set $\Omega_0$ of full measure such that $\sup_{n \in \N_0} Y_n/\gamma^n$ is finite for all $\omega \in \Omega_0$. 
On $\Omega_0$, we therefore have
$$
\limsup_{n \to \infty} \frac 1n \sup \{\bar{\lambda} v_n - \mu s_n\}\le \limsup_{n \to \infty} \frac 1n \log Y_n \le \log \gamma, 
$$
where the sup is extended over all paths of length $n$. Therefore, for each path
$$
\bar{\lambda} \limsup_{n \to \infty} \frac {v_n}n < \log \gamma + \mu \bar{W}(F) = 0  \mbox{ on the set } 
\Big\{\limsup_{n \to \infty} \frac{s_n}n < \bar{W}(F)\Big\} \cap \Omega_0.
$$
In particular, we have $\liminf_{n \to \infty} w_n(\omega)<0$ on that set (in fact even for $\limsup$ instead of $\liminf$), 
so the statement of the lemma follows.
\end{proof}

\begin{remark}
We note the following properties of the functions $W$ and $\bar{W}$.
\begin{romlist} 
\item The functions $W$ and $\bar{W}$ defined in Theorem~\ref{main} and Lemma~\ref{discrete} are nonnegative (let $\mu \to \infty$). 
Further, $\bar{W}(F) >0$ whenever  $F> \frac 1{\bar{\lambda}} \big( \log \bar \beta + \log \big( 1+(1-\exp\{-\bar{\lambda}\})^{-1}\big)\big)$, 
$W(F)>0$ whenever $F-2$ satisfies the same property.

\item If we choose $\mu=\bar{\lambda} + 1/F$, then we see that $F-\bar W (F) \lesssim \frac{1}{\bar{\lambda}} \log F$ as $F\to \infty$.
\end{romlist}
\end{remark}

\begin{remark}
The set $\Omega_0$ in the previous lemma can actually be chosen independently of $F$ since both sides of \eqref{ineq} depend 
continuously on $F$.
\end{remark}

Now we turn to Theorem \ref{main}. The proof will follow immediately from the following statement.
\begin{lemma} \label{lem:vel_est}
Let $\tilde{f}_i$ and $W$ as in the statement of Theorem~\ref{main}. Then almost surely we have for all
non-negative sequences $u_i \in \R$, $u_i \ge 0$ for all $i \in \Z$ that
$$
 \limsup_{n \to \infty}\frac 1n \sum_{i=0}^{n-1} \Big(u_{i-1}+u_{i+1}-2u_i - \tilde{f}_i(u_i,\omega) +F \Big)^+ \ge W(F).
$$
\end{lemma}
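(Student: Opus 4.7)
The plan is to reduce Lemma \ref{lem:vel_est} to Lemma \ref{discrete} by a straightforward discretization: round each $u_i$ to an integer $w_i \in \N_0$, and dominate the random nonlinearity $\tilde{f}_i$ over each unit cell by a single random variable $\bar{f}_{ij}$.

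First I set $w_i := \lceil u_i - 1/2 \rceil$. Since $u_i \ge 0$, we have $w_i \in \N_0$ and $u_i \in (w_i - 1/2,\, w_i + 1/2]$. Writing $\varepsilon_i := u_i - w_i \in (-1/2, 1/2]$, the rounding error on the discrete Laplacian is controlled:
$$
(u_{i-1} + u_{i+1} - 2 u_i) - (w_{i-1} + w_{i+1} - 2 w_i) = \varepsilon_{i-1} + \varepsilon_{i+1} - 2\varepsilon_i \;\ge\; -2.
$$
Next I introduce
$$
\bar f_{ij}(\omega) := \sup_{j-1/2 < y \le j+1/2} \tilde f_i(y,\omega),
$$
replacing this $\omega$-wise supremum by a measurable dominating random variable whose expectation satisfies $\E \exp\{\tilde\lambda \bar f_{ij}\} \le \tilde\beta$ (as is permitted by the remark following Theorem \ref{main}). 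Because the $\tilde f_i$ are independent in $i$, the sequences $(\bar f_{ij})_{j \in \Z}$ are independent in $i$; and because $u_i \in (w_i-1/2,w_i+1/2]$, we have $\tilde f_i(u_i,\omega) \le \bar f_{i,w_i}(\omega)$ pointwise.

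Combining these two observations gives the pointwise lower bound
$$
u_{i-1} + u_{i+1} - 2 u_i - \tilde f_i(u_i,\omega) + F \;\ge\; w_{i-1} + w_{i+1} - 2 w_i - \bar f_{i,w_i}(\omega) + (F-2),
$$
and monotonicity of $t \mapsto t^+$ preserves it. Summing over $i = 0,\dots,n-1$ and dividing by $n$ yields the comparison
$$
\frac 1n \sum_{i=0}^{n-1}\bigl(u_{i-1}+u_{i+1}-2u_i-\tilde f_i(u_i,\omega)+F\bigr)^+ \;\ge\; \frac 1n \sum_{i=0}^{n-1}\bigl(w_{i-1}+w_{i+1}-2w_i-\bar f_{i,w_i}(\omega)+(F-2)\bigr)^+.
$$

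To finish, I invoke Lemma \ref{discrete} with $\bar\lambda := \tilde\lambda$, $\bar\beta := \tilde\beta$, and driving force $F-2$ applied to the (possibly $\omega$-dependent) integer path $w = (w_i)$. The set $\Omega_0$ of full measure produced there is uniform over all such paths, and the resulting lower bound $\bar W(F-2)$ equals $W(F)$ upon comparing the formulas in Lemma \ref{discrete} and Theorem \ref{main}. The main (mild) subtlety in the argument is the measurability fix for the supremum defining $\bar f_{ij}$; the rest is routine, with the $-2$ shift in the driving force exactly matching the $F-2$ appearing inside the definition of $W(F)$.
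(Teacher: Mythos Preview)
Your proof is correct and follows essentially the same route as the paper: round $u_i$ to the nearest integer $w_i$, dominate $\tilde f_i$ on each unit cell by $\bar f_{ij}:=\sup_{y\in(j-1/2,j+1/2]}\tilde f_i(y,\omega)$, absorb the rounding error in the discrete Laplacian by subtracting $2$ from $F$, and invoke Lemma~\ref{discrete} with $\bar\lambda=\tilde\lambda$, $\bar\beta=\tilde\beta$ to get $\bar W(F-2)=W(F)$. Your presentation is in fact slightly more explicit than the paper's about the rounding-error bound and the measurability fix for $\bar f_{ij}$, but the argument is the same.
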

\begin{proof}
We define the associated discrete path $w_i$, $i \in \{-1,0,...\}$ taking values in $\N_0$ by rounding $u_i$ to the closest integer 
(rounding up in case of ties). 
To apply Lemma \ref{discrete}, we define $\bar f_{ij}:=\sup_{y \in (j-.5,j+.5]}  \tilde{f}_i(y,\omega)$.
Then 
we have
$$
\Big( u_{i-1} +  u_{i+1}-2  u_{i}-\tilde{f}_i(u_i,\omega)+F\Big)^+  \ge \Big( w_{i-1} +  w_{i+1} -2  w_{i}-\bar f_i(w_i,\omega)+F-2\Big)^+  
$$ 
(we subtract 2 in order to compensate the deviations between the $u_i$ and the $w_i$) and therefore -- by Lemma \ref{discrete} -- we obtain
$$
\limsup_{n \to \infty} \frac 1n \sum_{i=0}^{n-1} \Big(u_{i-1}+u_{i+1}-2u_i - \tilde{f}_i(u_i,\omega) +F \Big)^+ \ge \bar W(F-2)=W(F)
$$
for $\omega \in \Omega_0$ and $\Omega_0$ of full measure and independent of the choice of the $u_i$.
\end{proof}
\noindent{\bf Proof of Theorem \ref{main}.}
Assume that the (first) statement in the theorem is untrue. Then there exist $F \ge 0$ and some $t_0$  
such that $\E\dot u_0(t_0) < W(F)$. By our stationarity and independence assumptions on the field $f$, the processes 
$u_i(t_0)$, $\dot u_i(t_0)$, $i \in \Z$ are stationary and ergodic and take values in $[0,\infty)$. 
We write $u_i$ instead of $u_i(t_0)$. By Birkhoff's ergodic theorem, we have 
$\E\dot u_0= \lim_{n \to \infty}\frac 1n \sum_{i=0}^{n-1} \Big(u_{i-1}+u_{i+1}-2u_i - \tilde{f}_i(u_i,\omega) +F \Big)^+ < W(F)$ almost surely.  
This is a contradiction to Lemma~\ref{lem:vel_est}.
\hfill $\Box$

\subsection{Almost sure statements about the propagation velocity} \label{subsec:almost}
In the discrete case, it is also possible to show some statements about the velocity of the interface that hold almost surely. 

\begin{proposition} \label{prop:diff_est} Consider our standard discrete set-up from Theorem~\ref{main} with the following relaxed assumptions on the 
$\tilde{f}_i$: the $\tilde{f}_i$ are nonnegative random functions 
(no independence or stationarity assumptions). Then we have $\lim_{t \to \infty}\frac{u_i(t)-u_j(t)}{t}=0$ for all $i,j \in \Z$ and all 
$\omega \in \Omega$. 
\end{proposition}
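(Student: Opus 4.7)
My plan is to establish the proposition in two main steps. First, I would prove the a priori bounds $0 \leq u_i(t) \leq Ft$ for all $i \in \Z$ and $t \geq 0$. The lower bound is immediate since the equation has the form $\dot u_i = (\cdots)^+ \geq 0$ and $u_i(0) = 0$. The upper bound follows by comparison with $\bar u_i(t) := Ft$, which exactly solves the obstacle-free version of \eqref{eq:disc_evol} and shares the initial data; the comparison principle, valid since the right-hand side of \eqref{eq:disc_evol} is monotone nondecreasing in the coupled variables $u_{i\pm1}$, then gives $u_i(t) \leq Ft$. These bounds already yield $|u_i(t) - u_j(t)|/t \leq F$ uniformly, so the task is to sharpen this to $o(1)$.

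By the triangle inequality it suffices to treat consecutive indices $j = i+1$, so the problem reduces to a sublinear bound on $|u_{i+1}(t) - u_i(t)|$. The guiding heuristic is the diffusive smoothing of the discrete Laplacian in \eqref{eq:disc_evol}: absent obstacles, all $u_i$ equal $Ft$ by symmetry, and a site that is slowed down by a large obstacle should, via the coupling through the Laplacian, drag down its neighbors as well. In the extreme case where a single $\tilde{f}_0$ pins $u_0 \equiv 0$ for all time, a similarity-solution analysis of the resulting discrete heat equation with Dirichlet boundary at site $0$ predicts $u_i(t) \sim Ft \cdot (1 - h(i/\sqrt{t}))$ for a decaying profile $h$; in particular $u_i(t)/t \to 0$ for every fixed $i$ and $|u_{i+1}(t) - u_i(t)| = O(\sqrt{t})$. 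This motivates the quantitative target $|u_{i+1}(t) - u_i(t)| = O(\sqrt{t})$ (or at least $o(t)$, which is all we need).

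The central difficulty is the complete absence of a priori bounds on the $\tilde f_i$. A direct Lipschitz estimate using the $1$-Lipschitz property of $x \mapsto x^+$ gives $|\dot u_i - \dot u_{i+1}| \leq |A_i - A_{i+1}|$ where $A_k := u_{k-1} + u_{k+1} - 2 u_k - \tilde f_k(u_k) + F$, and the resulting bound contains the unbounded terms $\tilde f_i(u_i)$ and $\tilde f_{i+1}(u_{i+1})$, which cannot be controlled uniformly. Two partial remedies are: the pointwise identity $\tilde f_k(u_k(s)) = u_{k-1}(s) + u_{k+1}(s) - 2 u_k(s) + F - \dot u_k(s)$, valid on the set $\{\dot u_k(s) > 0\}$, which controls $\tilde f_k$ there by the Step~1 bounds; and on the complementary set $\{\dot u_k = 0\}$, the observation that $u_k$ is stationary so contributes nothing to $u_k(t) - u_k(0)$. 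Splitting the time integration according to the sign of each $\dot u_k(s)$ and combining with a summation-by-parts against the discrete Laplacian, so that the unbounded obstacle contributions only enter through telescoping combinations, is the natural path to the desired sublinear bound. Making this cancellation rigorous, without any moment assumption on $\tilde f$, is the step I expect to be the main obstacle in the proof.
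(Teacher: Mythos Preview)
Your Step~1 (the bounds $0\le u_i(t)\le Ft$) and the reduction to adjacent sites are correct and are also used in the paper. But your main line of attack---integrating $|\dot u_i-\dot u_{i+1}|$ in time and trying to cancel the unbounded $\tilde f$-terms via a splitting into $\{\dot u_k>0\}$ and $\{\dot u_k=0\}$---is not completed, and you correctly flag this as the main obstacle. The paper avoids this difficulty entirely by a purely \emph{spatial} argument that never touches the obstacle terms.

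The missing idea is the following obstacle-free inequality: for every $i$ and every $t\ge 0$,
\[
u_{i-1}(t)+u_{i+1}(t)-2u_i(t)\ \ge\ -F.
\]
To see this, set $H(t):=u_{i-1}(t)+u_{i+1}(t)-2u_i(t)+F$. If $H(t)-\tilde f_i(u_i(t))<0$ then $\dot u_i(t)=0$ while $\dot u_{i\pm 1}(t)\ge 0$, so $H$ is nondecreasing at that instant; since $H(0)=F\ge 0$ and $H$ is continuous, $H(t)\ge 0$ for all $t$. Thus the second differences are bounded below by $-F$ uniformly, with no reference to $\tilde f$ at all. Now fix $\Gamma>0$ and suppose $(u_{i+1}(t)-u_i(t))/t\ge\Gamma$. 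By the bound above, for any $n\in\N$ and $\gamma:=\Gamma/n$, once $t>F/\gamma$ the successive differences $(u_{i+k+1}(t)-u_{i+k}(t))/t$ can decrease by at most $\gamma$ per step, so summing $n$ of them yields $(u_{i+n}(t)-u_i(t))/t\ge \Gamma(n+1)/2$. Since the left side is at most $F$, this forces $n\le 2F/\Gamma-1$, a contradiction for $n$ large. Hence $|u_{i+1}(t)-u_i(t)|/t\le\Gamma$ for all $t>2F^2/\Gamma^2$, and letting $\Gamma\downarrow 0$ gives the claim. Your time-integration scheme is not needed.
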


\begin{proof}
Let 
$$
H(t):=u_{i-1}(t) + u_{i+1}(t) - 2u_i(t) + F.
$$
We then have
$$
H(t) \ge h(t):= u_{i-1}(t) + u_{i+1}(t) - 2u_i(t) + F -\tilde{f}_i(u_i(t),\omega).
$$
If $h(t) < 0$, we have $\frac{\dd}{\dd t} u_i(t) =0$ and thus $\frac{\dd}{\dd t} H(t) = \frac{\dd}{\dd t} h(t) \ge 0$.
Since $H(t)$ is a continuous function of time, we have $H(t) \ge 0$ for all $t$.

Now let $\gamma>0$. The non-negativity of $H(t)$ implies that
\begin{equation}\label{gamma}
u_{i-1}(t)+u_{i+1}(t)-2u_i(t)>-\gamma t \qquad \mbox{ whenever } t > \frac{F}{\gamma},\;i \in \Z. 
\end{equation}
Fix $\Gamma>0$ and $n \in \N$ and define $\gamma:=\Gamma/n$. Let $t>F/\gamma$ and assume that there exists some $i \in \Z$ 
such that $\frac{u_{i+1}(t)-u_i(t)}{t}\ge \Gamma$. Since $F \ge u_j(t)/t \ge 0$ for all $j$, \eqref{gamma} implies that
$$
F \ge \frac{u_{i+n}(t)-u_i(t)}{t} = \sum_{k=0}^{n-1} \frac{u_{i+k+1}(t)-u_{i+k}(t)}{t}\ge \sum_{k=0}^n (\Gamma-k\gamma)=\Gamma \frac{n+1}{2}.
$$
This inequality can only hold in case $n \le \frac{2F}{\Gamma}-1$. The same is true in case  $\frac{u_{i+1}(t)-u_i(t)}{t}\le -\Gamma$. 
Therefore, $\frac{|u_{i+1}(t)-u_i(t)|}{t}\le \Gamma$ for all $i \in \Z$ whenever $t> 2F^2/\Gamma^2$. Since $\Gamma>0$ was arbitrary, 
the assertion follows.
\end{proof}

\begin{remark} \label{rem:limsup}
Under the assumptions of Theorem  \ref{main}, the processes 
\begin{align*}
i &\mapsto \limsup_{t \to \infty}u_i(t)/t \quad \textrm{and} \\
i &\mapsto \liminf_{t \to \infty}u_i(t)/t
\end{align*}
 are ergodic. By the previous proposition, these processes do not depend on $i$. These two facts 
together imply that there exist deterministic numbers $0 \le c_1 \le c_2 \le F$ such that $\liminf_{t \to \infty}u_i(t)/t=c_1$ and 
$\limsup_{t \to \infty}u_i(t)/t=c_2$ almost surely for each $i \in \Z$. In general, $c_1$ and $c_2$ will not coincide. We know that 
$c_2 \ge W(F)$ since, by Fatou's Lemma, 
\begin{align*}
c_2&=\E \limsup_{t \to \infty}u_0(t)/t=F-\E\liminf_{t \to \infty}\big(F-u_0(t)/t\big)\\
&\ge F- \liminf_{t \to \infty}\E\big(F-u_0(t)/t\big)\ge W(F).
\end{align*}
We conjecture that we also have $c_1 \ge W(F)$.
\end{remark}

\section{Discretization of the continuum problem} \label{sec:cont}

We now return to the continuum problem. In the first part we prove the statement about the expected value of the velocity
employing a discretization of the problem. In the second part we extend the almost sure result from subsection~\ref{subsec:almost} to
the continuum model.

\subsection{Proof of the main theorem}
For the discretization, we largely rely on the ideas put forward by Coville, Dirr and Luckhaus in~\cite{Coville:2010p1074}. As seen there, we first introduce a modified problem, basically turning off the driving force
in the rows where the obstacles lie. Let $A := \R \setminus \{ \bigcup_{i\in\Z} (i-\delta, i+\delta)\}$. We restrict
$F$ to act on the set $A$. Let thus $\tilde{u} \colon \R\times [0,\infty)\times \Omega$ solve the modified problem
\begin{align}
\label{eq:modified}
\tilde{u}_t(x,t,\omega) &=  \tilde{u}_{xx}(x,t,\omega) - f(x,\tilde{u}(x,t,\omega),\omega) + F\chi_{A}(x) \\
\tilde{u}(x,0,\omega) &= 0 \nonumber
\end{align}
for $t\ge0$, $x\in\R$, $\omega \in \Omega$. Existence and uniqueness of classical solutions
for both the original problem~\eqref{eq:governing} as well as for the modified problem are proved 
in~\cite{Coville:2010p1074}, Lemma 3.2. Since we have
$$
- f(x,y,\omega) + F\chi_{A}(x) \le - f(x,y,\omega) + F \quad \textrm{for all} \quad x,y\in\R, \omega \in \Omega,
$$
it follows that
\begin{equation} \label{eq:modified_expectation}
\E  U(t,\omega) = \E \int_0^1 u(\xi, t, \omega) \dd \xi \ge \E  \tilde{U}(t,\omega) := \E \int_0^1 \tilde{u}(\xi, t, \omega) \dd \xi.
\end{equation}
by the comparison principle for parabolic equations~\cite{Nirenberg:1953p1315}.
It is thus sufficient to prove Theorem~\ref{thm:cont} replacing $u$ with a solution of the modified problem.

The following proposition shows that the velocity of a solution of~\eqref{eq:modified} will be non-negative
for all times.
\begin{proposition} \label{prop:pos_dudt}
Let $\tilde{u}$ be a solution of~\eqref{eq:modified}. We have
$$
\tilde{u}_t(x,t,\omega) \ge 0 \quad \textrm{for all} \quad t>0, x\in\R, \omega\in\Omega.
$$
\end{proposition}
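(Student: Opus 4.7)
The strategy is to show that $t \mapsto \tilde{u}(x,t,\omega)$ is non-decreasing for each fixed $x$ and $\omega$, and then conclude $\tilde{u}_t \ge 0$ from the fact that $\tilde{u}$ is a classical solution of~\eqref{eq:modified}. The only tool that is needed is the parabolic comparison principle for this semilinear equation, which is already available (see~\cite{Coville:2010p1074}, Lemma~3.2, and the argument leading to~\eqref{eq:modified_expectation}).

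The first step is to verify that $\tilde{u}\ge 0$. The crucial observation is that $f(x,0,\omega)\equiv 0$: every bump $\phi(x-i,y-j-1/2)$ appearing in~\eqref{eq:random_field} has $y$-support in $[j+1/2-\delta,\,j+1/2+\delta]$, and since $\delta<1/2$ no such interval contains $0$ for any $j\in\Z$. Hence the constant function $v\equiv 0$ satisfies $v_t = 0 \le v_{xx} - f(x,v,\omega) + F\chi_A(x)$, i.e.\ it is a sub-solution of~\eqref{eq:modified}, and comparison against $\tilde{u}$ (both vanishing at $t=0$) gives $\tilde{u}(x,t,\omega)\ge 0$. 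The second step exploits that~\eqref{eq:modified} is autonomous in $t$. For any $h>0$ set $w(x,t) := \tilde{u}(x,t+h,\omega)$; then $w$ solves the same PDE as $\tilde{u}$ with initial datum $w(x,0)=\tilde{u}(x,h,\omega)\ge 0=\tilde{u}(x,0,\omega)$. A second application of the comparison principle yields $\tilde{u}(x,t+h,\omega)\ge \tilde{u}(x,t,\omega)$ for every $t\ge 0$ and $h>0$, i.e.\ $\tilde{u}$ is non-decreasing in $t$. Dividing by $h$ and letting $h\downarrow 0$, using that $\tilde{u}$ is $C^1$ in $t$, yields $\tilde{u}_t(x,t,\omega)\ge 0$.

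The only mildly delicate point is that comparison is being applied on the unbounded spatial domain $\R$ for a semilinear equation whose nonlinearity $f(x,\cdot,\omega)$ has, in general, no globally controlled Lipschitz constant in $y$ (the $f_{ij}$ only have exponential tails). On any time strip $\R\times[0,T]$, however, one has the a priori upper bound $\tilde{u}(x,t,\omega)\le Ft$ by comparing with the super-solution $(x,t)\mapsto Ft$; this confines the argument to a bounded range of $y$-values on which $f$ is, for each fixed $\omega$, locally Lipschitz in $y$. The standard parabolic comparison principle of~\cite{Nirenberg:1953p1315} then applies on each such strip, so no genuinely new difficulty arises beyond what is already handled in~\cite{Coville:2010p1074}.
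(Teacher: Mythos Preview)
Your proof is correct and takes a genuinely different route from the paper's argument. The paper differentiates the PDE in $t$ and applies a minimum-principle argument to $v:=\tilde u_t$: it locates a first time $t_0$ and point $x_0$ where $\tilde u_t$ touches zero from above, observes that $\tilde u_{tt}=\tilde u_{xxt}-f_y\,\tilde u_t$ there, and derives a contradiction from $\tilde u_{xxt}(x_0,t_0)\ge 0$. Your argument instead exploits the time-autonomy of \eqref{eq:modified} together with the comparison principle: first $\tilde u\ge 0$ because the obstacle field vanishes on $\{y=0\}$ (so $0$ is a subsolution), then the time-shifted function $w(\cdot,t)=\tilde u(\cdot,t+h)$ dominates $\tilde u$ at $t=0$ and hence for all $t$, giving monotonicity in $t$. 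The advantage of your approach is that it stays at the level of $\tilde u$ and never needs the extra time-derivative of the equation---the paper itself concedes in a footnote that $\tilde u_{tt}$ may not be classical and that a viscosity interpretation is required; your argument avoids this regularity issue entirely. The paper's approach, on the other hand, is more local and would adapt to equations that are not autonomous in $t$. Your discussion of the comparison principle on $\R$ with only locally Lipschitz nonlinearity is apt; the paper relies on the same tool (via \cite{Coville:2010p1074}, Lemma~3.2 and the argument before \eqref{eq:modified_expectation}), so you are invoking exactly what is already available.
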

\begin{proof}
Since there are no obstacles on the line $\{y=0\} \subset \R^2$, we have $\tilde{u}_t(x,0,\omega) \ge 0$ for all
$x\in \R$, $\omega \in \Omega$. In fact, there even exists $\epsilon>0$, so that $\tilde{u}_t(x,t,\omega) > 0$
for all $t\in (0,\epsilon)$, $x\in\R$, $\omega \in \Omega$, as one can easily see from the solution of the parabolic equation which is still linear for sufficiently small time $t$. Assume now that the proposition is untrue. Due to the
fact that the solution $\tilde{u}$ is classical, there would have to exist a minimal $t_0>0$ and $x_0\in\R$, $\omega_0\in\Omega$ such that
$$
\tilde{u}_t(x_0,t_0,\omega_0) = 0.
$$
and $\epsilon>0$ so that $\tilde{u}_t(x_0,t_0+\delta,\omega_0) < 0$ for all $\delta \in (0,\epsilon)$.
Differentiating the equation with respect to $t$ at this point yields\footnote{This additional time derivative might not be smooth,
but the argument holds unchanged by considering the equation in the sense of viscosity solutions.}
$$
\tilde{u}_{tt}(x_0,t_0,\omega_0) =  \tilde{u}_{xxt}(x_0,t_0,\omega_0) - 
f_y(x_0,\tilde{u}(x_0,t_0,\omega_0),\omega_0)\tilde{u}_{t}(x_0,t_0,\omega_0).
$$
The $f$-term vanishes due to the assumptions on $t_0$, $x_0$, and $\omega_0$. We note that
due to the fact that, by the assumption that $t_0$ is the first time that $u_t$ is becoming
negative anywhere,  $\tilde{u}_t(x,t_0,\omega_0) \ge 0$ for all $x\in\R$.  Thus, $\tilde{u}_{tt}(x_0,t_0,\omega_0)$ is positive contradicting the negativity of $\tilde{u}_{t}$.
\end{proof}

Next, we define an associated discrete process to the evolution equation~\eqref{eq:modified}. Let 
\begin{equation} \label{eq:discretized}
\hat{u}_i := \tilde{u}(i-\delta) + 2\delta \tilde{u}_x(i-\delta) \quad \textrm{for $i\in\Z$}.
\end{equation}
For notational simplicity, we have omitted the time- and $\omega$-dependence of the terms.

The next step is to use the estimates on the discrete Laplacian found in~\cite{Coville:2010p1074} and adapt them to our case of an additional positive velocity on the right hand side of the equation.
\begin{proposition} \label{prop:disc_lap_est}
We have, for $\tilde{u}_i$ defined as in~\eqref{eq:discretized}, $\delta<1/2$ as in equation~\eqref{eq:def_delta},
\begin{align}
\hat{u}_{i-1} -2 \hat{u}_i + \hat{u}_{i+1} \le \;& (1+2\delta)\left[\tilde{u}_x(i+\delta) - \tilde{u}_x(i-\delta)-\int_{i-\delta}^{i+\delta} \tilde{u}_t(\xi) \dd\xi \right]   \nonumber\\
&- (1-2\delta)F+2(1+\delta)\int_{i-1-\delta}^{i+1-\delta}  \tilde{u}_t(\xi) \dd\xi  \label{eq:lap_est}.
\end{align}
for each $i\in\Z$. The estimate holds for any time and on all of $\Omega$.
\end{proposition}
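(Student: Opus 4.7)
The plan is to rewrite $\hat u_{i-1}-2\hat u_i+\hat u_{i+1}$ as a single integral of a piecewise linear weight $w$ against $\tilde u_{xx}$, then substitute the PDE $\tilde u_{xx}=\tilde u_t+f-F\chi_A$, and finally bound each of the three resulting pieces separately. Positivity of $\tilde u_t$ (Proposition~\ref{prop:pos_dudt}) will be crucial to discard contributions coming from subintervals where $w$ is negative.

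\textbf{Step 1: the weighted-integral representation.} Apply Taylor's formula with integral remainder to $\tilde u(i\pm 1-\delta)$ centered at $i-\delta$, and to $\tilde u_x(i\pm 1-\delta)$ centered at $i-\delta$. Combining the four expansions with the definition $\hat u_j=\tilde u(j-\delta)+2\delta\,\tilde u_x(j-\delta)$ gives
$$
\hat u_{i-1}-2\hat u_i+\hat u_{i+1} \;=\; \int_{i-1-\delta}^{i+1-\delta} w(\xi)\,\tilde u_{xx}(\xi)\dd\xi,
$$
where $w=h+2\delta g$, with $h$ the unit-height tent function peaked at $i-\delta$ (support $[i-1-\delta,i+1-\delta]$) and $g$ equal to $-1$ on $[i-1-\delta,i-\delta]$ and $+1$ on $[i-\delta,i+1-\delta]$. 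A quick tabulation shows $w\in[-2\delta,0]$ on $[i-1-\delta,i-1+\delta]$, $w\in[0,1-2\delta]$ on $[i-1+\delta,i-\delta]$, $w\in[1,1+2\delta]$ on $[i-\delta,i+\delta]$, and $w\in[2\delta,1]$ on $[i+\delta,i+1-\delta]$.

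\textbf{Step 2: bounding the three pieces.} Insert $\tilde u_{xx}=\tilde u_t+f-F\chi_A$. For the $F\chi_A$ piece, only the subintervals $(i-1+\delta,i-\delta)$ and $(i+\delta,i+1-\delta)$ contribute (the obstacle columns have $\chi_A=0$), and an elementary computation of $\int w\,\chi_A\dd\xi$ on these two intervals gives exactly $1-2\delta$, producing the $-(1-2\delta)F$ term. For the $f$ piece, within the integration window $f(\cdot,\tilde u(\cdot))$ is supported only in the two obstacle strips $[i-1-\delta,i-1+\delta]$ and $[i-\delta,i+\delta]$ (the strip around $i+1$ only touches the endpoint). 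On the first strip $w\le 0$ and $f\ge 0$, so that contribution is nonpositive. On the second strip $w\le 1+2\delta$, so $\int w f\dd\xi\le (1+2\delta)\int_{i-\delta}^{i+\delta}f\dd\xi$. Now integrate the PDE on $[i-\delta,i+\delta]$, using $\chi_A\equiv 0$ there, to obtain
$$
\int_{i-\delta}^{i+\delta} f\dd\xi \;=\; \tilde u_x(i+\delta)-\tilde u_x(i-\delta)-\int_{i-\delta}^{i+\delta}\tilde u_t\dd\xi,
$$
which yields the $(1+2\delta)[\cdots]$ term in the claim.

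\textbf{Step 3: the $\tilde u_t$ piece and conclusion.} Here I invoke Proposition~\ref{prop:pos_dudt} to get $\tilde u_t\ge 0$ pointwise. Then $w\le 1+2\delta\le 2(1+\delta)$ on the whole integration window (since $\delta<1/2$) gives $\int w\,\tilde u_t\dd\xi\le 2(1+\delta)\int_{i-1-\delta}^{i+1-\delta}\tilde u_t\dd\xi$, which is the final term. Adding the three bounds proves the claim for every $t$ and every $\omega$.

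\textbf{Main obstacle.} The only delicate part is the sign bookkeeping for the piecewise linear weight $w$ on the four subintervals and identifying on which subintervals each of $\chi_A$ and $f$ vanishes (forced by $\delta<1/2$). Once this is laid out, everything reduces to the already-established identity for $\int f\dd\xi$ on the central obstacle strip and to the pointwise bound $\tilde u_t\ge 0$; if the latter were unavailable one could not absorb the negative-weight regions and the clean coefficients on the right-hand side would not be attainable.
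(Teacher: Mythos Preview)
Your argument is correct and takes a genuinely different route from the paper. The paper introduces an auxiliary function $\tilde v(x):=\tilde u(x)-\int_{i-1-\delta}^{x}\int_{i-1-\delta}^{y}\tilde u_t(\xi)\dd\xi\dd y$, which solves a \emph{stationary} equation ($\tilde v_{xx}=-F$ on $A$, $\tilde v_{xx}=f$ on the obstacle columns); it then invokes Lemma~4.1 of \cite{Coville:2010p1074} as a black box to obtain the discrete-Laplacian estimate for $\hat v$, and finishes by comparing $\hat u_j$ with $\hat v_j$ term by term via the positivity of $\tilde u_t$. You bypass both the auxiliary function and the external lemma: you write $\hat u_{i-1}-2\hat u_i+\hat u_{i+1}=\int w\,\tilde u_{xx}$ with the explicit piecewise-linear kernel $w=h+2\delta g$, substitute $\tilde u_{xx}=\tilde u_t+f-F\chi_A$, and bound each piece directly using the sign pattern of $w$ on the four subintervals. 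In effect you are reproving the cited lemma inline, which makes your argument self-contained; it also yields the slightly sharper coefficient $1+2\delta$ (rather than $2(1+\delta)$) in front of the velocity integral, though you then relax it to match the stated bound. The paper's approach, on the other hand, cleanly separates the ``stationary'' estimate from the time-derivative correction and thus makes transparent exactly how the dynamic result extends the earlier static one. One cosmetic point: the inequality $1+2\delta\le 2(1+\delta)$ holds for every $\delta\ge 0$, so your parenthetical ``since $\delta<1/2$'' is not what justifies that step.
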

\begin{proof}
For some $i\in\Z$ let 
\begin{equation} \label{eq:vdef}
\tilde{v}(x) := \tilde{u}(x) - \int_{i-1-\delta}^{x}\int_{i-1-\delta}^{y} \tilde{u}_t(\xi) \dd \xi \dd y
\end{equation}
  for $x \in [i-1-\delta, i+1-\delta]$. We see that $\tilde{v}(x)$ solves 
$$
\tilde{v}_{xx} = \left\{ \begin{array}{ll} -F &\textrm{on $(i-1+\delta, i-\delta) \cup (i+\delta, i+1-\delta)$} \\
 f(x,\tilde{u}(x),\omega) &\textrm{otherwise}. 
 \end{array}\right.
$$
To this function, we apply the estimate of the discrete Laplacian found in~\cite{Coville:2010p1074}, Lemma 4.1, to obtain
\begin{equation} \label{eq:disc_est_novel}
\hat{v}_{i-1} -2 \hat{v}_i + \hat{v}_{i+1} \le  (1+2\delta)\left[\tilde{v}_x(i+\delta) - \tilde{v}_x(i-\delta)\right] - (1-2\delta)F.
\end{equation}
Here, $\hat{v}$ is discretized in the same way as $\hat{u}$, i.e, $\hat{v}_j = \tilde{v}(j-\delta) + 2\delta \tilde{v}_x(j-\delta)$, 
$j \in \{i-1,i,i+1\}$.

The next step is to estimate the difference between $\hat{v}$ and $\hat{u}$. First, we note that
\begin{equation} \label{eq:force_mod}
\left[\tilde{v}_x(i+\delta) - \tilde{v}_x(i-\delta)\right]  = \left[\tilde{u}_x(i+\delta) - \tilde{u}_x(i-\delta)-\int_{i-\delta}^{i+\delta} \tilde{u}_t(\xi) \dd\xi \right].
\end{equation}
It is clear that $\hat{v}_{i-1} = \hat{u}_{i-1}$. Furthermore, we have, by the non-negativity of $\tilde{u}_t$
(see Proposition~\ref{prop:pos_dudt}), that
$$
\tilde{u}(i-\delta) \ge \tilde{v}(i-\delta)
$$
and
$$
\tilde{u}_x(i-\delta) \ge \tilde{v}_x(i-\delta),
$$
which yields, by the definition of the discretization,
$$
\hat{u}_i \ge \hat{v}_i,
$$
and thus
\begin{equation} \label{eq:first_lap}
\hat{u}_i - \hat{u}_{i-1} \ge  \hat{v}_i -  \hat{v}_{i-1}.
\end{equation}
For the second term in the discrete Laplacian, we first estimate, by~\eqref{eq:vdef}, and again using positivity of $\tilde{u}_t$,
\begin{align*}
\tilde{u}(i+1-\delta)  &= \tilde{v}(i+1-\delta)  + \int_{i-1-\delta}^{x}\int_{i-1-\delta}^{x} \tilde{u}_t(\xi) \dd \xi \dd x \\
&\le \tilde{v}(i+1-\delta) + 2 \int_{i-1-\delta}^{i+1-\delta}  \tilde{u}_t(\xi) \dd \xi.
\end{align*}
In addition, we see that 
$$
u_x(i+1-\delta) \le v_x(i+1-\delta) + \int_{i-1-\delta}^{i+1-\delta}  \tilde{u}_t(\xi) \dd \xi
$$
and find, combining those two estimates and using again the definition of the discretization,
\begin{equation} \label{eq:second_lap}
\hat{u}_{i+1} - \hat{u}_i \le \hat{v}_{i+1} - \hat{v}_i + 2(1+\delta) \int_{i-1-\delta}^{i+1-\delta}  \tilde{u}_t(\xi) \dd \xi.
\end{equation}
Now, subtracting~\eqref{eq:first_lap} from~\eqref{eq:second_lap}, we get 
\begin{equation} \label{eq:lap_est2}
\hat{u}_{i-1} -2 \hat{u}_i + \hat{u}_{i+1} \le \hat{v}_{i-1} -2 \hat{v}_i + \hat{v}_{i+1} + 2(1+\delta) \int_{i-1-\delta}^{i+1-\delta}  \tilde{u}_t(\xi) \dd \xi.
\end{equation}
The proposition follows by inserting~\eqref{eq:disc_est_novel} and~\eqref{eq:force_mod} into~\eqref{eq:lap_est2}.
\end{proof}
\begin{corollary} \label{cor:vel_est_pos}
Using the non-negativity of $\tilde{u}_t$ shown in Proposition~\ref{prop:pos_dudt} we can deduce from Proposition~\ref{prop:disc_lap_est} that
\begin{align*}
2(1+\delta)\int_{i-1-\delta}^{i+1-\delta} \tilde{u}_t(\xi) \dd\xi \ge\;& \Big(\hat{u}_{i-1} -2 \hat{u}_i + \hat{u}_{i+1} \\
&-  (1+2\delta)\left[\tilde{u}_x(i+\delta) - \tilde{u}_x(i-\delta)-\int_{i-\delta}^{i+\delta} \tilde{u}_t(\xi) \dd\xi \right]  \\
&+ (1-2\delta)F \Big)^+,
\end{align*}
i.e., the integrated velocity can be estimated by the positive part of the discretized problem.
\end{corollary}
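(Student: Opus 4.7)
The plan is essentially to rewrite Proposition~\ref{prop:disc_lap_est} as a lower bound on the time-integral term and then use non-negativity to replace the right-hand side by its positive part.

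First, I would rearrange the inequality in Proposition~\ref{prop:disc_lap_est} by moving the term $2(1+\delta)\int_{i-1-\delta}^{i+1-\delta}\tilde{u}_t(\xi)\dd\xi$ to the left-hand side and the remaining terms to the right. This yields directly
\[
2(1+\delta)\int_{i-1-\delta}^{i+1-\delta}\tilde{u}_t(\xi)\dd\xi \ge \hat{u}_{i-1}-2\hat{u}_i+\hat{u}_{i+1} -(1+2\delta)\Big[\tilde{u}_x(i+\delta)-\tilde{u}_x(i-\delta)-\int_{i-\delta}^{i+\delta}\tilde{u}_t(\xi)\dd\xi\Big]+(1-2\delta)F.
\]

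Second, I would invoke Proposition~\ref{prop:pos_dudt}, which gives $\tilde{u}_t(\xi,t,\omega)\ge 0$ pointwise, so that the left-hand side of the displayed inequality is itself non-negative. The elementary fact that if a real number $a$ satisfies both $a\ge b$ and $a\ge 0$, then $a\ge b^+$, then upgrades the bound to one with $(\,\cdot\,)^+$ on the right, which is exactly the statement of the corollary.

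The argument involves no genuine obstacle; it is a one-line consequence of Proposition~\ref{prop:disc_lap_est} combined with Proposition~\ref{prop:pos_dudt}. The only thing worth double-checking is the sign bookkeeping in the rearrangement (the bracketed gradient term keeps the same sign, while the force term $(1-2\delta)F$ flips from $-$ to $+$), but this is entirely routine.
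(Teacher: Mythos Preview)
Your argument is correct and is exactly the approach the paper has in mind: the corollary is stated without a separate proof, its justification being precisely the rearrangement of Proposition~\ref{prop:disc_lap_est} together with the non-negativity from Proposition~\ref{prop:pos_dudt}. Your sign bookkeeping is also right.
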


Next, we come to estimate the effect of the obstacles. We procede as in~\cite{Coville:2010p1074}. Let
$$
k(i) := \tilde{u}_x(i-\delta) - \tilde{u}_x(i+\delta).
$$
The following proposition gives an estimate for $k$ in terms of the obstacles passed by the function $\tilde{u}$.
\begin{proposition} \label{prop:obst_est}
Given $\tilde{u}(x) = \tilde{u}(x,t,\omega)$, solution of the evolution equation~\eqref{eq:modified} at fixed time and fixed $\omega \in\Omega$,
let $i\in \Z$, $M := \max\{ \abs{\tilde{u}_x(i-\delta)}, \abs{\tilde{u}_x(i+\delta)} \}$. We then have
$$
k(i) \le \frac{18\delta}{M} \sum_{\hat{u}_i -4\delta M \le j \le \hat{u}_i + 4\delta M  } f_{ij}(\omega) 
+ \int_{i-\delta}^{i+\delta} \tilde{u}_t(\xi) \dd \xi.
$$
\end{proposition}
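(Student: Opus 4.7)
My plan is to use the PDE on the obstacle column to turn $k(i)$ into an integral of the random force $f$, then exploit the additive structure of $f$ and the convexity of $\tilde u$ in $x$ on that column to bound the integral.

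\emph{Step 1 (Key identity).} For $\xi\in(i-\delta,i+\delta)$ we have $\chi_A(\xi)=0$, so equation~\eqref{eq:modified} reduces there to $\tilde u_{xx}=\tilde u_t+f(\cdot,\tilde u,\omega)$. Integrating from $i-\delta$ to $i+\delta$ yields
$$
-k(i)=\tilde u_x(i+\delta)-\tilde u_x(i-\delta)=\int_{i-\delta}^{i+\delta}\tilde u_t(\xi)\,\dd\xi+\int_{i-\delta}^{i+\delta} f(\xi,\tilde u(\xi),\omega)\,\dd\xi,
$$
so the proposition is equivalent to an upper bound on the $f$-integral by $\frac{18\delta}{M}\sum_j f_{ij}(\omega)$.

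\emph{Step 2 (Column expansion).} Plugging~\eqref{eq:random_field} into the $f$-integral and using $\operatorname{supp}\phi\subset[-\delta,\delta]^2$ with $\delta<1/2$, only the column $i'=i$ contributes, so
$$
\int_{i-\delta}^{i+\delta} f(\xi,\tilde u(\xi),\omega)\,\dd\xi=\sum_{j\in\Z} f_{ij}(\omega)\,I_j,\quad I_j:=\int_{i-\delta}^{i+\delta}\phi\bigl(\xi-i,\tilde u(\xi)-j-\tfrac12\bigr)\,\dd\xi.
$$

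\emph{Step 3 (Convexity and $j$-range).} From Step~1 and Proposition~\ref{prop:pos_dudt}, $\tilde u_{xx}\ge 0$ on $(i-\delta,i+\delta)$, so $\tilde u_x$ is non-decreasing; hence $|\tilde u_x|\le M$ throughout the column, and $\tilde u$ is strictly monotone on each side of its (at most one) local minimum. The slope bound yields $|\tilde u(\xi)-\tilde u(i-\delta)|\le 2\delta M$, and combined with $\hat u_i=\tilde u(i-\delta)+2\delta\tilde u_x(i-\delta)$ we get $|\tilde u(\xi)-\hat u_i|\le 4\delta M$, which forces $I_j=0$ outside the range $\hat u_i-4\delta M\le j\le\hat u_i+4\delta M$ stated in the proposition.

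\emph{Step 4 (Per-$j$ bound via change of variables).} On each monotone sub-interval the substitution $y=\tilde u(\xi)$ gives
$$
\int_{\mathrm{piece}}\phi\bigl(\xi-i,\tilde u(\xi)-j-\tfrac12\bigr)\,\dd\xi\le \int_{|y-j-1/2|\le\delta}\frac{\dd y}{|\tilde u_x(\xi(y))|},
$$
so large slope translates into small $I_j$. To obtain the uniform scaling $I_j\lesssim \delta/M$ and match the explicit prefactor $18\delta/M$, I partition $(i-\delta,i+\delta)$ into a \emph{fast} sub-interval where $|\tilde u_x|\gtrsim M$ (on which the change-of-variables bound applies directly) and a complementary \emph{slow} sub-interval where $|\tilde u_x|$ is small; by monotonicity of $\tilde u_x$ each is an interval. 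On the slow part $\tilde u$ has a limited height range, so only a bounded number of $j$'s receive contributions there and the crude bound $I_j\le 2\delta$ can be absorbed. Careful bookkeeping of the constants in both regimes produces the factor $18\delta/M$ after summing over $j$.

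The main obstacle is Step~4, because the change-of-variables estimate degenerates exactly where $\tilde u_x$ nearly vanishes (which can occur inside the column). Splitting by slope magnitude, counting the relevant $j$'s in each regime, and tracking the resulting numerical constants is the technical heart of the proof; Steps~1--3 amount to combining the PDE, the explicit structure of $f$, and the convexity of $\tilde u$ on the obstacle column.
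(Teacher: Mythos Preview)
Your approach is essentially what the paper does. The paper's proof is a one-liner: cite Lemma~4.2 of Coville--Dirr--Luckhaus \cite{Coville:2010p1074} (the stationary version of the estimate, which relies only on convexity of $\tilde u$ on the column) and then add $\int_{i-\delta}^{i+\delta}\tilde u_t\,\dd\xi$ to account for the time derivative. Your Step~1 is precisely this ``subtract and add the effect of $\tilde u_t$'' reduction, and Steps~2--4 are an outline of the cited CDL lemma itself.

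One caution on Step~4. The fast/slow split as you describe it does not obviously deliver the \emph{uniform} bound $I_j\le 18\delta/M$: for the index $j$ sitting at the minimum value of the convex function $\tilde u$ on the column, the level set $\{\xi:|\tilde u(\xi)-j-\tfrac12|\le\delta\}$ can have length of order $\delta/\sqrt{M}$ (take e.g.\ $\tilde u_x$ linear from $0$ to $M$), which for large $M$ is far bigger than $18\delta/M$. Saying ``only a bounded number of $j$'s receive slow contributions, so the crude bound $I_j\le 2\delta$ can be absorbed'' is therefore not enough, because the target is a per-$j$ factor of $\delta/M$ and the $f_{ij}$ could concentrate exactly on those bad $j$'s. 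The CDL argument handles this more carefully using convexity (the large $\tilde u_{xx}$ that produces the large $M$ also forces the slow region to be short in a quantitative way tied to $M$); you should consult their Lemma~4.2 for the mechanism that actually produces the constant $18$ rather than asserting the bookkeeping works.
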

\begin{proof}
This follows immediately from~\cite{Coville:2010p1074}, Lemma 4.2, after subtracting and adding the effect of
the positive time derivative on a snapshot of the function $\tilde{u}$. The proof in~\cite{Coville:2010p1074} only uses convexity of the 
function $\tilde{u}$ inside $(i-\delta, i+\delta)$, which also holds in our case.
\end{proof}
We now need a uniform estimate on some exponential moment of the average of the random variables in the estimate of $k$. The following 
proposition provides this result.
\begin{proposition} \label{prop:av_exp_estimate}
Let $X_1, X_2, \dots$ be real valued iid random variables such that there exists $\lambda >0$ with
$$
\E \exp \{ \lambda X_1 \} = \beta <\infty.
$$
Then we have
$$
\E \exp \{ \tilde{\lambda} \sup_{N\in\N} \frac{1}{N} \sum_{j=1}^N X_j \} \le \tilde{\beta}
$$
for $0 < \tilde{\lambda} < \lambda$ and
$$
\tilde{\beta} =\inf_{c>\exp\{(\log \beta)\tilde{\lambda}/\lambda\}}\left(c+\int_c^\infty  
\frac{ \beta \ee^{-\frac{\lambda}{\tilde{\lambda}}\log x }}{1-\beta\ee^{- \frac{\lambda}{\tilde{\lambda}} \log x }} \dd x\right) < \infty.
$$ 
\end{proposition}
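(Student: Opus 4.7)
The plan is a standard Chernoff-plus-union-bound argument on the supremum, followed by a tail integral to recover an exponential moment.

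First, I would reduce the problem to a tail estimate on $M := \sup_{N \in \N} \frac{1}{N}\sum_{j=1}^N X_j$. Writing $S_N = \sum_{j=1}^N X_j$, for any $s \in \R$, the union bound and Markov's inequality applied to $e^{\lambda S_N}$ give
\begin{equation*}
\P(M > s) \le \sum_{N=1}^\infty \P(S_N > Ns) \le \sum_{N=1}^\infty \beta^N e^{-\lambda N s} = \frac{\beta e^{-\lambda s}}{1-\beta e^{-\lambda s}},
\end{equation*}
provided $\beta e^{-\lambda s}<1$, i.e.\ $s > (\log \beta)/\lambda$. This is the workhorse estimate, and it is finite because of the assumed exponential moment.

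Next, I would convert this tail bound into the exponential moment by the layer cake formula applied to the nonnegative random variable $Y := e^{\tilde\lambda M}$: splitting the integral at a threshold $c > 0$,
\begin{equation*}
\E Y = \int_0^\infty \P(Y > t)\,\dd t \le c + \int_c^\infty \P\!\left(M > \frac{\log t}{\tilde\lambda}\right)\dd t.
\end{equation*}
Substituting $s = (\log t)/\tilde\lambda$ into the previous tail bound yields $e^{-\lambda s} = t^{-\lambda/\tilde\lambda}$, so
\begin{equation*}
\E e^{\tilde\lambda M} \le c + \int_c^\infty \frac{\beta e^{-(\lambda/\tilde\lambda)\log t}}{1-\beta e^{-(\lambda/\tilde\lambda)\log t}} \, \dd t,
\end{equation*}
for every $c$ large enough that the denominator stays bounded away from zero on $[c,\infty)$, namely $c > \exp\{(\log\beta)\tilde\lambda/\lambda\}$. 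Taking the infimum over admissible $c$ gives exactly the expression for $\tilde\beta$ claimed in the statement.

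The only point deserving care is finiteness of the integral. Since $\tilde\lambda < \lambda$, the exponent $\lambda/\tilde\lambda$ is strictly greater than $1$, so the integrand decays like $\beta\, t^{-\lambda/\tilde\lambda}$ at infinity, which is integrable. This is the single place where the assumption $\tilde\lambda < \lambda$ enters, and it is the only mild obstacle in the proof; everything else is a direct application of Chernoff's bound and the layer cake representation.
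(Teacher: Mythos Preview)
Your proof is correct and follows essentially the same route as the paper: a Chernoff/Markov union bound over $N$ to control the tail of the supremum, followed by the layer cake formula split at a threshold $c$. The only cosmetic difference is that the paper phrases the tail bound via $I(u)=\lambda u-\log\beta$ and explicitly invokes the strong law of large numbers to note that the supremum is almost surely finite, while you additionally spell out why $\tilde\lambda<\lambda$ ensures integrability of the tail integral.
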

\begin{proof}
Let $S := \sup_{N\ge 1} \frac{1}{N} \sum_{j=1}^N X_j$. It is clear from the strong law of large numbers that this supremum is finite almost
surely. We have
\begin{align*}
\P\left( S > u\right) &\le \sum_{N=1}^\infty \P\left( \frac{1}{N} \sum_{j=1}^N X_j \ge u\right) \\
&\le  \sum_{N=1}^\infty \P\left( \ee^{\lambda \sum_{j=1}^N X_j} \ge \ee^{\lambda N u}\right) \\
&\le \sum_{N=1}^\infty \ee^{-\lambda Nu}\left( \E \ee^{\lambda X_1 } \right)^N \\
&= \sum_{N=1}^\infty \ee^{-N \left( \lambda u - \log \E \ee^{\lambda X_1} \right) }.
\end{align*}
Markov's inequality as well as independence and the identical distribution of the $X_j$ was used in the last inequality. Setting
$I(u) := \lambda u - \log \E \ee^{\lambda X_1} =  \lambda u - \log \beta$, we find that
$$
\P\left( S> u\right) \le \sum_{N=1}^\infty \ee^{-NI(u)} = \frac{\ee^{-I(u)}}{1-\ee^{-I(u)}}
$$
for $u$ so large that the series converges, i.e, we have $I(u)>0$. Setting $0 < \tilde{\lambda} < \lambda$ yields
\begin{align*}
\E \ee^{\tilde{\lambda} S} &= \int_0^\infty \P \left(\ee^{\tilde{\lambda}S} > x\right) \dd x = 
\int_0^\infty \P\left( S > \frac{1}{\tilde{\lambda}} \log x\right) \dd x \\
&\le c + \int_c^\infty  \frac{\ee^{-I( \frac{1}{\tilde{\lambda}} \log x)}}{1-\ee^{-I( \frac{1}{\tilde{\lambda}} \log x)}}  \dd x.
\end{align*}
This holds for any $c>\exp\{(\log \beta)\tilde{\lambda}/\lambda\}$, since then $I(\frac{1}{\tilde{\lambda}} \log c)>0$.
\end{proof}
\begin{remark}
A similar result has been derived in~\cite{Siegmund:1969p1779}, albeit without an explicit quantitative estimate.
\end{remark}

We can now proceed to prove the main result.\\
\begin{proof}[ of Theorem~\ref{thm:cont}]
The proof is divided into three steps. We first use Proposition~\ref{prop:av_exp_estimate} to find a suitable set of random variables
to dominate the effect of the precipitates. Then we show that the discretization of a solution to the continuous initial value
problem~\eqref{eq:modified} has to remain bounded from below. Finally we use the obstacles from the first step in a discretized 
problem and are able to estimate the integrated velocity.
\\[3mm]
{\bf Step 1:} Set
$$
g_{ij}(\omega) := 1+(1+2\delta)\sup_{M \in \N} \frac{36 \delta}{M} \sum_{j-4\delta M \le l \le j+4\delta M} f_{il}(\omega).
$$
From Proposition~\ref{prop:obst_est} it is clear (note that $M\ge1/2$ if $k\ge 1$) that $g_{ij} \ge k(i)$ for $j=\lceil \hat{u}_i-1/2 \rceil$, i.e., $j$ is $\hat{u}_i$
rounded to the nearest integer.
Using the bound $\delta < 1/2$ and counting the number of summands we find that
$$
g_{ij} \le 1+ 180 \sup_{N\in \N\cup \{0\}} \frac{1}{2N+1} \sum_{k=j-N}^{j+N} f_{ik}.
$$
Since the $f_{ik}$ are independent and identically distributed and we have by Assumption~\ref{ass:f} that
$\E e^{\lambda f_{ik}} =\beta <\infty$ it is now possible to give a bound on an exponential moment of $g_{ij}$ using
Proposition~\ref{prop:av_exp_estimate}. Namely, we have
$$
\E \exp \{ \tilde{\lambda} g_{ij} \} \le \tilde{\beta}
$$
with $0 < \tilde{\lambda} < \lambda$ and 
$$
\tilde{\beta}:= \ee^{\tilde{\lambda}} \inf_{c>\exp\{(\log\beta)\frac{180\tilde{\lambda}}{\lambda}\}}\left(c+\int_c^\infty  \frac{ \beta \ee^{-\frac{\lambda}{180\tilde{\lambda}}\log x }}{1-\beta\ee^{- \frac{\lambda}{180\tilde{\lambda}} \log x }} \dd x\right) < \infty.
$$
Now let 
$$
\tilde{f}_i(y,\omega) := g_{i\lceil y-1/2\rceil}(\omega),
$$
i.e, simply evaluate $g_{ij}$ at $j=\lceil y-1/2 \rceil$, so $j$ is $y$ rounded to the nearest integer. It follows that
\begin{equation} \label{eq:dominate_obst}
 \tilde{u}_x(i-\delta) - \tilde{u}_x(i+\delta)- \int_{i-\delta}^{i+\delta} \tilde{u}_t(\xi) \dd \xi \le \tilde{f}_i(\hat{u},\omega)   .
\end{equation}
\\[3mm]
{\bf Step 2:} Now let $\tilde{u}(x,t_0,\omega)$ be a solution of the evolution equation~\eqref{eq:modified} at time $t_0$ with fixed parameter $F$
and let $\hat{u}_i$ for $i \in \Z$ be its discretization according to equation~\eqref{eq:discretized}. We claim that there exists $C\in\R$, so that
$$
\hat{u}_i \ge C \quad.
$$
Indeed, we have $t_0 F\ge \tilde{u}\ge0$, so $\inf_{i\in \Z} (\tilde{u}(i+1-\delta) - \tilde{u}(i-\delta)) \ge -t_0F$. But we also have
that $\tilde{u}_{xx} \ge - F$, due to the non-negativity of $\tilde{u}_t$ and the non-negativity of $f$. It follows that 
\begin{align*}
-t_0F \le \tilde{u}(i+1-\delta) - \tilde{u}(i-\delta) &= \int_{i-\delta}^{i+1-\delta} \tilde{u}_x(\xi) \dd\xi \\
&\le \sup_{\xi \in [i-\delta,i+1-\delta]} \tilde{u}_x(\xi) \\
&\le \tilde{u}_x(i+1-\delta) +F.
\end{align*}
Thus, $\hat{u}_{i+1} \ge \tilde{u}(i+1-\delta)  -2\delta (1+t_0)F \ge -2\delta (1+t_0)F$ for all $i\in\Z$. 
\\[3mm]
{\bf Step 3:} Now we can finally apply Lemma~\ref{lem:vel_est} to the discrete process $\hat{u}_i$. It is clear that the estimate
in the Lemma holds for any sequence that is uniformly bounded from below, the bound zero in the proof is arbitrary. We thus find that
$$
W(F) \le \lim_{n \to \infty}\frac 1n \sum_{i=0}^{n-1} \Big(\hat{u}_{i-1}+\hat{u}_{i+1}-2\hat{u}_i - \tilde{f}_i(\hat{u}_i,\omega) +F \Big)^+,
$$
with $W$ from Theorem~\ref{main} using the parameters $\tilde{\lambda}$ and $\tilde{\beta}$ from Step 1.
Using Corollary~\ref{cor:vel_est_pos} and equation~\eqref{eq:dominate_obst}, we see that
\begin{align*}
W((1-2\delta)F) &\le  \lim_{n \to \infty}\frac 1n \sum_{i=0}^{n-1} \Big(\hat{u}_{i-1} -2 \hat{u}_i + \hat{u}_{i+1} - \tilde{f}_i(\hat{u}_i,\omega) + (1-2\delta)F \Big)^+ \\
&\le   \lim_{n \to \infty}\frac 1n \sum_{i=0}^{n-1} \Big(\hat{u}_{i-1} -2 \hat{u}_i + \hat{u}_{i+1} \\
&\qquad  - \big( \tilde{u}_x(i-\delta) - \tilde{u}_x(i+\delta)- \int_{i-\delta}^{i+\delta} \tilde{u}_t(\xi) \dd \xi \big)  \\
&\qquad + (1-2\delta)F \Big)^+\\
&\le \lim_{n \to \infty}\frac 1n \sum_{i=0}^{n-1} 2(1+\delta) \int_{i-1-\delta}^{i+1-\delta} \tilde{u}_t(\xi) \dd\xi\\
&\le 4(1+\delta)\, \E \frac{\dd}{\dd t}\tilde{U}(t).
\end{align*}
Integrating $\frac{\dd}{\dd t}\tilde{U}(t)$ and using~\eqref{eq:modified_expectation} proves the theorem.
\end{proof}

\subsection{Almost sure statements in the continuum model} \label{subsec:almost_cont}
It is possible to show the analog of Proposition~\ref{prop:diff_est} in the continuum model.
\begin{proposition} \label{prop:diff_conv}
Consider a solution of the model~\eqref{eq:governing}, again with the relaxed assumption that $f_{ij}$ are
nonnegative random variables. Then we have $\lim_{t\to\infty} \frac{u(t,x_2)-u(t,x_1)}{t} = 0$ for
all $x_1,x_2 \in \R$ and all $\omega\in\Omega$. Furthermore, the convergence is uniform for $x_1, x_2$ chosen on a compact
interval.
\end{proposition}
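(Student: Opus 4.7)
The plan is to transcribe the purely deterministic argument of Proposition~\ref{prop:diff_est} to the continuum, using as a substitute for the discrete inequality $u_{i-1}+u_{i+1}-2u_i+F\ge 0$ the pointwise inequality $u_{xx}(x,t,\omega)+F\ge 0$. This is equivalent to saying that $x\mapsto u(x,t,\omega)+\tfrac{F}{2}x^2$ is convex for each fixed $(t,\omega)$.

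To obtain that convexity it suffices to know $u_t\ge 0$ and $f\ge 0$, since then $u_{xx}=u_t+f-F\ge -F$. The nonnegativity of $u_t$ is established by copying the proof of Proposition~\ref{prop:pos_dudt} essentially verbatim: at $t=0$ one has $u(x,0,\omega)=0$, and because $\supp\phi\subset[-\delta,\delta]^2$ with $\delta<1/2$ the definition~\eqref{eq:random_field} forces $f(x,0,\omega)=0$, so $u_t(x,0,\omega)=F\ge 0$ initially; the ``first crossing'' argument then rules out $u_t$ ever becoming negative. We also need the two-sided bound $0\le u(x,t,\omega)\le Ft$. The lower bound follows from $u_t\ge 0$ and $u(\cdot,0,\omega)=0$, and for the upper bound $w:=u-Ft$ satisfies $w_t-w_{xx}=-f\le 0$ with $w(\cdot,0,\omega)=0$, so a parabolic comparison principle applied to the classical solutions produced in~\cite{Coville:2010p1074} yields $w\le 0$.

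Suppose now, for contradiction, that $u_x(x_0,t,\omega)\ge \Gamma t$ at some $x_0\in\R$ for some $\Gamma>0$. Convexity of $u+\tfrac{F}{2}x^2$ implies $u_x(x,t,\omega)\ge \Gamma t-F(x-x_0)$ for $x\ge x_0$, and integrating gives, for every $L>0$,
\[
u(x_0+L,t,\omega)-u(x_0,t,\omega)\;\ge\;\Gamma t\cdot L-\tfrac{F}{2}L^2.
\]
Choosing $L=\Gamma t/F$ makes the right-hand side equal to $\Gamma^2t^2/(2F)$, while the left-hand side is at most $Ft$ by the upper bound. Hence $t\le 2F^2/\Gamma^2$. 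A mirror-image computation starting from $u_x(x_0,t,\omega)\le -\Gamma t$ and integrating to the left leads to the same contradiction. Therefore
\[
\sup_{x\in\R}|u_x(x,t,\omega)|<\Gamma t \qquad \text{for every } t>2F^2/\Gamma^2.
\]

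The conclusion is then immediate: for any $x_1,x_2\in\R$ and any $t>2F^2/\Gamma^2$,
\[
|u(x_2,t,\omega)-u(x_1,t,\omega)|\;\le\; |x_2-x_1|\cdot\Gamma t,
\]
so $\limsup_{t\to\infty}|u(x_2,t,\omega)-u(x_1,t,\omega)|/t\le |x_2-x_1|\Gamma$; since $\Gamma>0$ was arbitrary the limit is zero. Because the threshold $2F^2/\Gamma^2$ does not depend on the points chosen and because $|x_2-x_1|\Gamma$ is controlled by the diameter of the compact set $K$, the convergence is uniform in $x_1,x_2\in K$. The only mildly technical point is justifying the upper bound $u\le Ft$ on the unbounded spatial domain; once that is granted the remainder is a short convexity exercise, so this is the step where I would expect to spend the most care.
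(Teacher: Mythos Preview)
Your argument is correct and rests on the same two ingredients the paper uses, namely $u_{xx}+F\ge 0$ (from $u_t\ge 0$ and $f\ge 0$) and the a~priori bounds $0\le u\le Ft$. The difference is in how you exploit them. The paper samples $u$ on the arithmetic progression $x_1+(j-1)(x_2-x_1)$, observes that the second differences satisfy $u_{j-1}+u_{j+1}-2u_j+2(x_2-x_1)^2F\ge 0$ (since a violation would force $u_{xx}(\xi)+F<0$ at some intermediate point), and then feeds this directly into the discrete Proposition~\ref{prop:diff_est}. You instead work with the pointwise convexity of $x\mapsto u(x,t)+\tfrac{F}{2}x^2$ to bound $|u_x|$ uniformly by $\Gamma t$ once $t>2F^2/\Gamma^2$, which yields the same threshold without ever passing through the discrete statement. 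Your route is a little more self-contained and gives the uniform bound on $u_x$ as a byproduct; the paper's route has the advantage of literally reusing Proposition~\ref{prop:diff_est} rather than reproving its content. The caveat you flag about the comparison principle on the whole line for $u\le Ft$ is shared by both proofs and is handled by the existence theory in~\cite{Coville:2010p1074}.
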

\begin{proof}
Let, without loss of generality $x_1 < x_2$ and let $u_j:=u(x_1+(j-1)(x_2-x_1))$ for $j\in\Z$. Again define
$$
H(t) := u_{j-1}(t) + u_{j+1}(t) - 2u_j(t) + 2(x_2-x_1)^2 F, \quad \textrm{for $j\in\Z$}.
$$
We have $H(t)\ge 0$. Indeed, assume that there exist $t_0\ge0, j_0\in\Z$ with $H(t_0)<0$, then at some point $\xi \in (x_1+(j_0-1)(x_2-x_1), x_1+(j_0+1)(x_2-x_1))$ we have
$u_{xx}(t_0, \xi) +F <0$. At this point, however, the propagation velocity of the interface would be negative,
which violates the comparison principle (see Proposition~\ref{prop:pos_dudt}). The rest of the proof continues
as the proof of the discrete version of the proposition.
\end{proof}
\begin{remark}
The analog of Remark~\ref{rem:limsup} also holds, i.e., under the assumptions of Theorem~\ref{thm:cont} there exist deterministic
numbers $0 \le c_1 \le c_2 \le F$ such that 
\begin{align*}
\liminf_{t \to \infty}u(t,x)/t&=c_1 \quad \textrm{and}  \\
\limsup_{t \to \infty}u(t,x)/t = \limsup_{t\to\infty} \inf_{\xi\in K} u(t,\xi)/t&=c_2
\end{align*} almost surely, for each $x \in \R$ and for any non empty compact set $K\subset \R$ . This follows by using the ergodicity of the process $\int_j^{j+1} u(\xi,t)/t \dd\xi$, $j\in\Z$ 
and by using the uniform convergence from Proposition~\ref{prop:diff_conv}.

Furthermore, we have $c_2 \ge V(F)$. Indeed, integrating over a spatial period and using uniform convergence, then using
Fubini's theorem yields
\begin{align*}
c_2&=\E\int_0^1 \limsup_{t \to \infty}u(t,\xi)/t \dd \xi = F-\E\liminf_{t \to \infty}\left(F-\int_0^1u(t,\xi)/t \dd \xi \right)\\
&\ge F- \liminf_{t \to \infty}\E\left(F-\int_0^1u(t,\xi)/t \dd \xi \right)\ge V(F).
\end{align*}
The question whether $c_1>0$ remains open also here.
\end{remark}

\section{Conclusion and open problems} \label{sec:conclusion}
We have shown that interfaces in random media can cover a finite area per unit time on average, even if there is 
no uniform upper bound to the strength of obstacles. This is an extension to~\cite{Coville:2010p1074}, where
non-existence of a stationary solution was shown under the same assumptions. Many questions remain open, however. 
Amongst those are the conjecture stated in the previous section, namely whether an almost sure statement can be made about
the inferior limit at a fixed point, even in the discrete problem.

Also open is the extension of the theorem to more than one dimension, as well as the question whether there exists a non-trivial
interval so that if $F\in [F_0,F_1]$ we do not have a finite velocity, but also no stationary solution.

\section*{Acknowledgements}
We are delighted to acknowledge useful discussions with Nicolas Dirr and are grateful for support from the DFG funded research group 
`Analysis and Stochastics in Complex Physical Systems.'


\newcommand{\etalchar}[1]{$^{#1}$}
\providecommand{\bysame}{\leavevmode\hbox to3em{\hrulefill}\thinspace}
\providecommand{\MR}{\relax\ifhmode\unskip\space\fi MR }
\providecommand{\MRhref}[2]{%
  \href{http://www.ams.org/mathscinet-getitem?mr=#1}{#2}
}
\providecommand{\href}[2]{#2}

\end{document}